\newtheorem{lemma}{Lemma}[section]
\newtheorem{thm}[lemma]{Theorem}
\newtheorem{prop}[lemma]{Proposition}
\newtheorem{conj}[lemma]{Conjecture}
\newtheorem{mainthm}{Theorem}
\newtheorem{maincor}[mainthm]{Corollary}
\theoremstyle{definition}
\newtheorem{defn}[lemma]{Definition}
\newtheorem{rmk}[lemma]{Remark}
\newtheorem{quest}[lemma]{Question}
\newtheorem{example}[lemma]{Example}
\title{The Farey tree and embeddings of lens spaces and rational balls in $\CP$}
\author{Marco Golla}
\address{CNRS and Nantes Universit\'e, Nantes, France}
\email{marco.golla@univ-nantes.fr}
\author[Brendan Owens]{Brendan Owens} 
\address{School of Mathematics and Statistics, University of Glasgow, University Place, Glasgow G12 8QQ, United Kingdom}
\email{brendan.owens@glasgow.ac.uk}
\date{}
\newsavebox\ltmcbox
\newcommand{\CP}{\mathbb{CP}\vphantom{\mathbb{CP}}^2}
\newcommand{\CPb}{\overline{\mathbb{CP}}\vphantom{\CP}^2}
\newcommand{\Bpq}{B_{p,q}}
\newcommand{\Z}{\mathbb{Z}}
\newcommand{\R}{\mathbb{R}}
\newcommand{\PP}{\mathbb{P}}
\newcommand{\C}{\mathbb{C}}
\newcommand{\de}{\partial}
\newcommand\sd{\mkern1.5mu{:}\mkern1.5mu}
\newcommand{\triple}[9]{$\left(\left(\frac{#1}{#2},#3\right),\left(\frac{#4}{#5},#6\right),\left(\frac{#7}{#8},#9\right)\right)$}
\begin{document}

\maketitle

\begin{abstract}
Motivated by a conjecture of Koll\'ar, we study embeddings of multiple rational homology balls in $\CP$.
To each node of the Farey tree, we associate such an embedding of three rational homology balls with lens space boundary, extending earlier work of the second author and of Lisca and Parma, using a recursive Kirby calculus argument.
We also give further explicit constructions of embeddings of triples of rational homology balls into homotopy $\CP$s.
\end{abstract}

\section{Introduction}\label{s:intro}
A basic, yet fundamental, question in 4-manifold topology is the following.
Given a smooth 4-manifold $X$ and a class $\mathcal{Y}$ of 3-manifolds, which $Y \in \mathcal{Y}$ smoothly embed in $X$?
In this paper, we focus on the case when $\mathcal{Y}$ is the class of lens spaces, which are the simplest 3-manifolds after the sphere, and on the simplest possible 4-manifold in which they can embed\footnote{It is known since Hantzsche's work~\cite{Hantzsche} that lens spaces cannot embed in $S^4$.}: the complex projective plane $\CP$.

Whenever a lens space $L$ embeds in $\CP$, it splits it into two components, one of which is a \emph{rational homology ball}, a 4-manifold with the rational homology of a point.
Therefore, asking for the embedding of $L$ into $\CP$ is equivalent to asking for the embedding into $\CP$ of a rational ball whose boundary is $L$.
Note that lens spaces that bound rational homology balls have been classified by Lisca~\cite{Lisca-ribbon}.

Early results of the existence of this type of embedding come from algebraic geometry~\cite{Manetti, hp} and symplectic topology~\cite{es}, as well as from~\cite{nonsymp,LPStein,HD1, HD2,EMPR}; see below for a more detailed historical account.
These results indicate that it is interesting to look at embeddings of \emph{triples} of rational homology balls in $\CP$, a question which is related  (but not equivalent) to asking about disjoint embeddings of lens spaces.

Here we push the ideas of~\cite{HD2} and of~\cite{EMPR} further, and we obtain many more embeddings of \emph{triples} of rational homology balls with lens space boundary in $\CPb$;
reversing all orientations gives embeddings in $\CP$.
Our triples give previously unknown examples, except possibly when $\min\{p_i\} \le 2$.

\begin{mainthm}\label{thm:ANN}
If $p_1,p_2,q_1,q_2$ are nonnegative integers with $q_1, q_2$ even, $p_i\ge q_i$, and $p_1q_2-p_2q_1=\pm2$, then the disjoint union
\[
B_{p_1,q_1}\sqcup B_{p_2,q_2}\sqcup B_{p_1+p_2,q_1+q_2}
\]
admits a smooth orientation-preserving embedding in $\CPb$.
\end{mainthm}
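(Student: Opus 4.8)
The plan is to follow the strategy indicated in the abstract: organize the admissible triples as the nodes of a subtree of the Farey tree, and argue by induction with the inductive step carried out by Kirby calculus. First I would reformulate the hypotheses. Writing $q_i=2b_i$, the condition $p_1q_2-p_2q_1=\pm2$ becomes $b_1p_2-b_2p_1=\pm1$, so $\gcd(b_i,p_i)=1$ and $b_1/p_1,\,b_2/p_2$ are Farey neighbours; the condition $p_i\ge q_i$ says precisely that $b_i/p_i\in[0,1/2]$, and $B_{p_1+p_2,q_1+q_2}$ corresponds to the Farey mediant $(b_1+b_2)/(p_1+p_2)$. Since $\{0/1,1/2\}$ is itself a Farey edge, the triangles of the Farey tessellation all of whose vertices lie in $[0,1/2]$ form a binary tree whose root is $\{0/1,1/3,1/2\}$, i.e.\ the triple $B_{1,0}\sqcup B_{2,2}\sqcup B_{3,2}$ (with $B_{1,0}=B^4$). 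Under this dictionary the theorem asserts an orientation-preserving embedding in $\CPb$ of the triple attached to each node, so it is enough to (a) treat the root and (b) pass from a node to each of its two children.

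For (a) I would exhibit an explicit Kirby diagram realizing $B_{1,0}\sqcup B_{2,2}\sqcup B_{3,2}\hookrightarrow\CPb$; since $B_{1,0}=B^4$ this is just an embedding of $B_{2,2}\sqcup B_{3,2}$ in $\CPb$ with an open ball removed, which should either already appear in, or follow by a small modification of, the constructions of~\cite{HD2} and~\cite{EMPR}. For (b), note that a node $T=\{\alpha,\beta,\gamma\}$ with $\gamma$ the mediant of $\alpha$ and $\beta$ has parent edge $\alpha\beta$, and its two children are obtained by crossing the edges $\alpha\gamma$ and $\beta\gamma$: crossing $\alpha\gamma$ yields $\{\alpha,\gamma,\delta\}$, with $\delta$ the mediant of $\alpha$ and $\gamma$, and crossing $\beta\gamma$ is symmetric. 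So the inductive step to prove is: if $B_\alpha\sqcup B_\beta\sqcup B_\gamma$ embeds orientation-preservingly in $\CPb$, then so does $B_\alpha\sqcup B_\gamma\sqcup B_\delta$; equivalently, keeping $B_\alpha$ and $B_\gamma$ fixed one may trade the disjoint copy of $B_\beta$ for a disjoint copy of $B_\delta$.

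To prove this I would strengthen the inductive hypothesis to record an actual Kirby diagram of $\CPb$ in which $B_\alpha$, $B_\beta$, $B_\gamma$ appear as three disjoint sub-handlebodies, each read off from part of the diagram, the remaining handles giving a diagram of the complement; then I would produce a sequence of blow-ups, blow-downs, handle slides and isotopies which (i) leaves $\CPb$ unchanged up to orientation-preserving diffeomorphism, (ii) carries the pieces for $B_\alpha$ and $B_\gamma$ to (possibly different) pieces still presenting $B_\alpha$ and $B_\gamma$, and (iii) converts the piece for $B_\beta$ into one that visibly contains a sub-handlebody presenting $B_\delta$, the leftover handles being absorbed into the complement. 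The combinatorics of step (iii) is the continued-fraction identity underlying the Farey mediant (reflecting that $\delta$ is the mediant of $\alpha$ and $\gamma$), which is what makes the recursion close up. The main obstacle is precisely this step: producing an explicit, checkable sequence of moves that effects the trade of $B_\beta$ for $B_\delta$ while returning $B_\alpha$ and $B_\gamma$ to recognisable form and leaving $\CPb$ and its orientation intact, and arranging the bookkeeping at the level of diagrams rather than diffeomorphism types so that the induction iterates. A secondary point is that the base case may need its own hands-on construction, and one should check separately that the small cases with $\min p_i\le2$, where the theorem notes the examples need not be new, fit the same picture.
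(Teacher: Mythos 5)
Your strategy coincides with the paper's: reindex so that the admissible triples become the nodes of a Farey-type binary tree (the paper's 2-Farey tree, which is your subtree of the Farey tessellation with denominators doubled), then run a structural induction whose step trades the non-mediant ball for the new mediant by Kirby calculus. The gap is that the proposal stops exactly where the proof has to start: you name the inductive step --- an explicit, checkable sequence of moves converting the piece for $B_\beta$ into one for $B_\delta$ while fixing $B_\alpha$, $B_\gamma$, and $\CPb$ --- as ``the main obstacle'' and do not supply it. That step is the entire technical content of the theorem, and without a concrete mechanism there is no reason the recursion closes up; in particular, your plan to track three embedded sub-handlebodies through an unspecified sequence of blow-ups, blow-downs and slides is exactly the bookkeeping problem that needs to be solved, not a reduction of it.

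The mechanism the paper uses, and which you would need, is the following. For a triple $\bigl(\frac{p_1}{q_1},\frac{p_2}{q_2},\frac{p_3}{q_3}\bigr)$ it builds one 2-handlebody $X$ with a single dotted circle and three 2-handles, the $i$-th attached along $K_{p_i,q_i}$ (a torus knot $T_{-p_i,q_i}$, or its $(2,-1)$-cable when $\gcd(p_i,q_i)=2$) placed in an $\varepsilon$-neighbourhood of the torus at distance $r_i$ from the dotted circle, with framing $-1$ relative to the torus framing; the three tori are nested. Then (\Cref{lem:LPembed}) the disjoint union of the three balls embeds in $X$ essentially for free, because $S^1\times B^3$ splits into three disjoint sub-copies of $S^1\times B^3$, one per torus --- so nothing has to be tracked through the Kirby moves. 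Separately (\Cref{prop:2Femb}), a single handle slide of the outer curve $K_3$ over the middle curve $K_2$ produces a curve lying near a torus nested inside that of $K_2$, with torus homology class $[K_3]-2[K_2]=-(p_1+p_2,q_1+q_2)$ and again framing $-1$ relative to the torus framing, i.e.\ exactly $K_{p_1+p_2,q_1+q_2}$; hence $X_{\mathrm{parent}}\cong X_{\mathrm{child}}$, and all the manifolds $X$ in the tree are a single diffeomorphism type. The base case is handled not by importing your root triple from \cite{HD2} or \cite{EMPR} (which is not obviously available there, e.g.\ because $\gcd(2,2)=2$) but by prepending two extra nodes to the tree so that the bottom diagram is $X_{\frac10,\frac12,\frac02}$, where one slide cancels the 1-handle and leaves a $-1$-framed unknot plus a $0$-framed unknot, i.e.\ $\CPb$ minus a 3- and a 4-handle.
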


When $p$ and $q$ are coprime or have greatest common divisor 2, $B_{p,q}$ is a rational homology ball constructed with only one 1-handle and one 2-handle, whose boundary is $L(p^2,pq-1)$.
Kirby diagrams for these rational balls are given in Figures~\ref{fig:Bpq} and~\ref{fig:Apq}.

An immediate corollary of Theorem~\ref{thm:ANN} above is the following.

\begin{maincor}\label{cor:Bpq}
Let $p\ge q$ be nonnegative integers.  If $\gcd(p,q)=2$ or if $p$ is odd and $\gcd(p,q)=1$, then the rational ball $B_{p,q}$ admits a smooth orientation-preserving embedding in $\CPb$.
\end{maincor}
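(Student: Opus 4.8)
The plan is to obtain Corollary~\ref{cor:Bpq} as a bookkeeping consequence of Theorem~\ref{thm:ANN}: for each admissible pair $(p,q)$ I want to realize $B_{p,q}$ as one of the three pieces of a triple $B_{p_1,q_1}\sqcup B_{p_2,q_2}\sqcup B_{p_1+p_2,q_1+q_2}$ satisfying the hypotheses of that theorem. Once this is done the corollary is immediate, since any piece of a smoothly embedded disjoint union is smoothly embedded, with its induced orientation.

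The first step is to normalize $(p,q)$. Using the symmetry $B_{p,q}\cong B_{p,p-q}$ --- which is visible from the Kirby diagrams in Figures~\ref{fig:Bpq}--\ref{fig:Apq}, and under which $\gcd(p,q)$ and the inequality $p\ge q$ are preserved --- I may replace $q$ by $p-q$ at will. Using this I would arrange that $q$ is even, say $q=2r$, \emph{and} that $\gcd(p,r)=1$. Indeed, when $p$ is odd and $\gcd(p,q)=1$, exactly one of $q$ and $p-q$ is even, both are coprime to $p$, and (as $p$ is odd) coprimality descends to the half; when $\gcd(p,q)=2$, write $p=2s$, $q=2r$ with $\gcd(r,s)=1$, and note that if $r$ is even then $s$ is odd and $p-q=2(s-r)$ has odd, hence $2s$-coprime, half $s-r$. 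After this replacement, either $p=1$ (so $B_{p,q}\cong B^4$ and there is nothing to prove), or $1\le r$ and $r/p\le 1/2$ in lowest terms, so that $r/p$ has a well-defined left Farey parent.

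With the normalization in place, set $(p_1,q_1)=(p,2r)$ and choose $(p_2,q_2)=(p_2,2r_2)$ as follows. The requirement $p_1q_2-p_2q_1=\pm2$ is equivalent to $p\,r_2-p_2\,r=\pm1$, which is solvable precisely because $\gcd(p,r)=1$; its solutions are exactly the Farey neighbours $r_2/p_2$ of $r/p$. Take $r_2/p_2$ to be the \emph{left} Farey parent of $r/p$, so that $0\le r_2/p_2<r/p\le 1/2$. Then $p_2\ge 1$, $0\le q_2=2r_2<p_2$, and a direct computation gives $p_1q_2-p_2q_1=2(p\,r_2-p_2\,r)=-2$; all hypotheses of Theorem~\ref{thm:ANN} hold, so $B_{p,q}\sqcup B_{p_2,q_2}\sqcup B_{p+p_2,q+q_2}$ embeds smoothly and orientation-preservingly in $\CPb$, and hence so does $B_{p,q}$.

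The only ingredient beyond Theorem~\ref{thm:ANN} is the normalization, so the step to get right is the symmetry $B_{p,q}\cong B_{p,p-q}$. It is essential: when $q$ is odd, or when $4\mid q$ in the case $\gcd(p,q)=2$, a short parity argument shows that $B_{p,q}$ itself occurs as a piece of \emph{no} triple arising from Theorem~\ref{thm:ANN}, every lens-space piece of such a triple having even parameter. I would therefore make sure this identification of the balls $B_{p,q}$ is stated and proved before invoking it; granting it, the remainder is elementary arithmetic on the Farey (Stern--Brocot) tree.
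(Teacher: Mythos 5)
Your argument is correct and follows essentially the same route as the paper: the paper's proof also reduces to the even-denominator case via the symmetry $B_{p,q}\cong B_{p,p-q}$ (which is established in \Cref{lem:addp}, so you may simply cite it) and then invokes \Cref{thm:2Farey} to place the fraction in a $2$-Farey triple before applying \Cref{thm:ANN}. The only difference is that you re-derive the needed instance of \Cref{thm:2Farey} by hand, exhibiting a Farey neighbour $r_2/p_2$ of $r/p$ explicitly, whereas the paper cites the tree-membership statement directly; your edge-case checks (the choice of the \emph{left} parent to keep $q_2\le p_2$, and the parity adjustment when $\gcd(p,q)=2$) are all sound.
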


In terms of embeddings of lens spaces, the following corollary shows that, among lens spaces of the form $L(p^2,pq-1)$ that bound a rational homology ball,
roughly three-quarters  embed in $\CP$.

\begin{maincor}\label{cor:Lpq}
Let $p\ge q$ be nonnegative integers.  If $\gcd(p,q)=2$ or if $p$ is odd and $\gcd(p,q)=1$, then the lens space $L(p^2,pq-1)$ admits a smooth embedding in $\CP$.
\end{maincor}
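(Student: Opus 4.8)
The plan is to obtain this as an immediate consequence of Corollary~\ref{cor:Bpq} by passing to boundaries. First I would invoke Corollary~\ref{cor:Bpq}: under the stated hypotheses on $p$ and $q$, it provides a smooth orientation-preserving embedding of the rational ball $B_{p,q}$ in $\CPb$. I would then restrict this embedding to $\partial B_{p,q}$, which by the description recalled in the paragraph preceding Corollary~\ref{cor:Bpq} is the lens space $L(p^2,pq-1)$; this already exhibits $L(p^2,pq-1)$ as a smoothly embedded $3$-manifold in $\CPb$.

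Second, I would observe that this suffices, since $\CP$ and $\CPb$ are one and the same underlying smooth $4$-manifold, differing only in the choice of orientation, so any submanifold of one is a submanifold of the other. If one prefers to keep track of orientations rather than appeal to this, one can argue instead that, because $H_1(\CPb;\Z/2)=0$, the embedded copy of $L(p^2,pq-1)$ separates $\CPb$ into $B_{p,q}$ and a complementary $4$-manifold $W$ with $\partial W=-L(p^2,pq-1)$; reversing the ambient orientation then realizes $W$ as a submanifold of $\CP=-\CPb$ whose boundary is $L(p^2,pq-1)$ with its original orientation. One should note that it would \emph{not} be correct to instead reverse all orientations in Corollary~\ref{cor:Bpq}, since that produces an embedding of $\overline{B_{p,q}}$ and hence of $-L(p^2,pq-1)=L(p^2,1-pq)$, which is in general a different lens space.

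I do not expect any genuine obstacle in this argument: all of the work is carried by Theorem~\ref{thm:ANN} and its Corollary~\ref{cor:Bpq}, which supply the embedding of the rational ball, and the statement above is essentially a restatement of that fact in the vocabulary of embeddings of lens spaces together with the elementary observations just described.
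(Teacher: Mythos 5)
Your proposal is correct and follows essentially the same route as the paper: the authors also deduce this corollary by embedding $B_{p,q}$ in $\CPb$ (via Corollary~\ref{cor:Bpq}), taking its boundary $L(p^2,pq-1)$, and then noting that an embedding of a 3-manifold into a 4-manifold is insensitive to reversing the ambient orientation. Your extra remarks on separating the lens space and on why one should not simply reverse all orientations in Corollary~\ref{cor:Bpq} are accurate but not needed beyond what the paper records.
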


The inspiration for the main theorem comes from the data extracted from the output of two more general constructions, that we describe in detail in Section~\ref{s:homotopyCP2s}, which give a large collection of embeddings of triples of rational homology balls in  homotopy $\CP$'s.
We expect that all these homotopy $\CP$'s are in fact standard (see \Cref{q:exoticCP2s} below).

In a companion paper~\cite{GOobst},  we provide obstructions to the existence of embeddings of such triples, and work in the direction of  a conjecture of Koll\'ar \cite{Kollar}; see also~\cite{JPP} for related recent work. 
It is worth noting that the embeddings of triples of Theorem~\ref{thm:ANN} account for a very large fraction of the unobstructed triples.

The proof of Theorem~\ref{thm:ANN} is inductive, and relies on a tricky, but rather direct, Kirby calculus argument.
It uses structural induction on a simple variant of the Farey tree, which we will define in Section~\ref{s:2Fareytree}. 
The Kirby calculus argument is related to Lisca and Parma's horizontal decompositions, and is informed by the Kirby diagrams of~\cite{EMPR}.
The main idea is to reduce all computations to handle slides in the neighbourhood of a 2-torus in the boundary of the 1-handlebody $S^1\times D^3$.

The handlesliding used to prove Theorem~\ref{thm:ANN} suggests another point of view on the results of Lisca and Parma in \cite{HD2}.  In \Cref{thm:trees} we use this point of view to derive new recursive descriptions of the triples of rational balls embedded in $\CP$ obtained in that paper.

\subsection*{History and motivation}
As mentioned above, the motivation for studying embeddings of lens spaces in $\CP$ comes from algebraic geometry.
Inspired by consequences of the orbifold Bogomolov--Miyaoka--Yau inequality, Koll\'ar asks in~\cite{Kollar} whether every compact, orientable, simply-connected 4-manifold $M$ with $H_1(M) = 0$, $H_2(M)\cong\Z$, and with boundary a union of spherical 3-manifolds $Y_1, \dots, Y_n$, satisfies:
\[
\sum_{i=1}^n \left(1-\frac1{|\pi_1(Y_i)|}\right) \le 3.
\]
Lens spaces are finite cyclic quotients of $S^3$, so they are spherical 3-manifolds, and indeed the complements of the rational homology balls that we embed in $\CP$ are manifolds satisfying the assumption in Koll\'ar's question.

A prominent family of examples of such manifolds originates from weighted projective planes $\PP(a \sd b \sd c)$ by removing small open neighbourhoods of their (potentially) singular points $(0\sd 0\sd 1)$, $(0\sd 1\sd 0)$, and $(1\sd 0\sd 0)$.
When $(a,b,c)$ satisfies $a^2 + b^2 + c^2 = 3abc$, the weighted projective plane $\PP(a^2 \sd b^2 \sd c^2)$ admits a deformation to $\CP$.
Such triples are called \emph{Markov triples}, and the deformation gives rise to an embedding of three rational homology balls, each with boundary a lens space, into $\CP$.
Whenever an embedding of this type exists, the orders of the fundamental groups are coprime perfect squares, and the inequality conjectured by Koll\'ar implies that $M$ has at most three boundary components.

In the realm of complex algebraic geometry, this is in fact the only instance of such embeddings, as was shown by Hacking and Prokhorov~\cite{hp}.
The corresponding result in the symplectic category is also known to hold, by work of Evans and Smith~\cite{es}.
Explicit Kirby diagrams for the associated 4-manifolds with boundary were exhibited by Etnyre, Min, Piccirillo, and Roy~\cite{EMPR}.
Topologically, however, the situation is quite different: earlier work of the second author~\cite{nonsymp},
 and of Lisca and Parma~\cite{LPStein,HD1, HD2}, shows that there are infinitely many embeddings of rational balls in $\CP$ that do not arise symplectically.

We would like to stress that, among the rational homology balls $\pm B_{p,q}$, the only ones that carry some `good' complex or symplectic structure are the those for which $\gcd(p,q) = 1$, with the positive sign.
By `good' here we mean that either they are diffeomorphic to a Milnor fibre in the complex setup \cite{wahl}, or that they have a symplectic structure with respect to which the boundary is convex in the symplectic setup \cite{LiscaSympFill}.
In particular, since in Theorem~\ref{thm:ANN} we have embeddings of $B_{p,q}$ in $\CPb$, none of the nontrival rational homology balls in any of the triples, with the exception of $B_{2,0}$, bear any complex or symplectic significance.

\subsection*{Notation and conventions}
Unless otherwise stated, homology is taken with integer coefficients. The lens space $L(p,q)$ is the quotient of $S^3 \subset \C^2$ by the action of the group generated by  $\begin{pmatrix}\zeta & 0 \\ 0 & \zeta^q\end{pmatrix}$, where $\zeta$ is a primitive $p^{\rm th}$ root of $1$, which is also the link of the cyclic quotient singularity of type $\frac1p(1,q)$. With this description, $L(p,q)$ is obtained by doing $-p/q$-surgery on the unknot in $S^3$.

\subsection*{Organisation of the paper}
In Section~\ref{s:balls} we give different descriptions of the balls $B_{p,q}$.
In Section~\ref{s:2Fareytree} we introduce the 2-Farey tree and state some of its properties, which we use in Section~\ref{s:ANN} to prove Theorem~\ref{thm:ANN}.
In Section~\ref{s:compare} we put the results from Theorem~\ref{thm:ANN} in a common framework with those of Lisca and Parma~\cite{HD2}, and also give new recursive descriptions of the examples obtained in \cite{HD2}.
In Section~\ref{s:homotopyCP2s} we describe two constructions of embeddings of triples of rational balls, coming from plumbings, and one construction of embeddings of rational balls and pairs of rational balls, coming from knots with lens space surgeries.
Finally, in Section~\ref{s:data} we share and discuss some data.

\subsection*{Acknowledgements}
We would like to warmly thank Yank{\i} Lekili for a question which initiated this collaboration.
We also thank Nikolas Adaloglou, Giulia Carfora, Johannes Hauber, Woohyeok Jo, Paolo Lisca, Marco Marengon, Jongil Park, and Kyungbae Park for stimulating conversations.
We would like to thank Andrea Parma for providing helpful data and clarifications about his papers.
The first author thanks the University of Glasgow for their hospitality.
The second thanks Oxford University and Nantes University for their hospitality.
Part of this work has been carried out at several conferences and workshops, including \emph{Surfaces in 4-manifolds} in le Croisic, \emph{4-vari\'et\'es: \`a travers les dimensions}
at CIRM in Marseilles, \emph{The low-dimensional workshop} during the \emph{Singularities and low-dimensional topology} research semester at the Erd\H{o}s Center in Budapest and \emph{Combinatorial and gauge-theoretical methods in low-dimensional topology and geometry} at the Centro De Giorgi in Pisa.


\section{The balls $\Bpq$}\label{s:balls}
We use Hirzebruch--Jung continued fractions, with the following notation:
$$[a_1,a_2,\ldots,a_k]
:=a_1-\frac{1}{a_2-\raisebox{-3mm}{$\ddots$
\raisebox{-2mm}{${-\frac{1}{\displaystyle{a_k}}}$}}}.$$

We will also briefly make use of Euclidean continued fractions, with 
$$[n_1,n_2,\ldots,n_m]^+
:=n_1+\frac{1}{n_2+\raisebox{-3mm}{$\ddots$
\raisebox{-2mm}{${+\frac{1}{\displaystyle{n_m}}}$}}}.$$

Let $p>q>0$ be integers with $\gcd(p,q)\in\{1,2\}$.  We suppose that
$$\frac{p}{q}=[a_1,\dots,a_k]$$
and
$$\frac{p}{p-q}=[b_1,\dots,b_\ell].$$
with $a_i,b_j\ge2$ for all $i,j$.

If $(p,q)=(1,0)$ then we set $\dfrac{p}{q}=[\,]$ (the empty continued fraction) and $\dfrac{p}{p-q}=[1]$.

If $(p,q)=(2,2)$ then  $\dfrac{p}{q}=[1]$ and $\dfrac{p}{p-q}=[\,]$.

Two Kirby diagrams of rational homology balls bounded by lens spaces  are shown in each of Figures \ref{fig:Bpq} and \ref{fig:Apq}.
We refer to the framed blue attaching circle, in the second diagram of each of the figures,  as $K_{p,q}$.  When $p$ and $q$ are coprime, $K_{p,q}$ is a (negative) torus knot,
which we may take to lie on a torus at distance $r>0$ from the dotted circle in the diagram.  When $\gcd(p,q)=2$, then $K_{p,q}$ is a $(2,-1)$ cable of a torus knot, which lies in an $\varepsilon$-neighbourhood, due to the cabling crossing, of a torus at distance $r>0$ from the dotted circle.  
We note that in each case, $K_{p,q}$ is unoriented, and $K_{p,q}=K_{-p,-q}$.

In each case we refer to the rational ball shown as $B_{p,q}$.  This is partly justified by the following lemma.  The case of $p,q$ coprime was previously established by \cite{lm} using symplectic methods.  The lemma may also be proved using \cite[Lemma 2.16]{ER}.

\begin{figure}[htbp]
\centering
\includegraphics[width=14cm]{./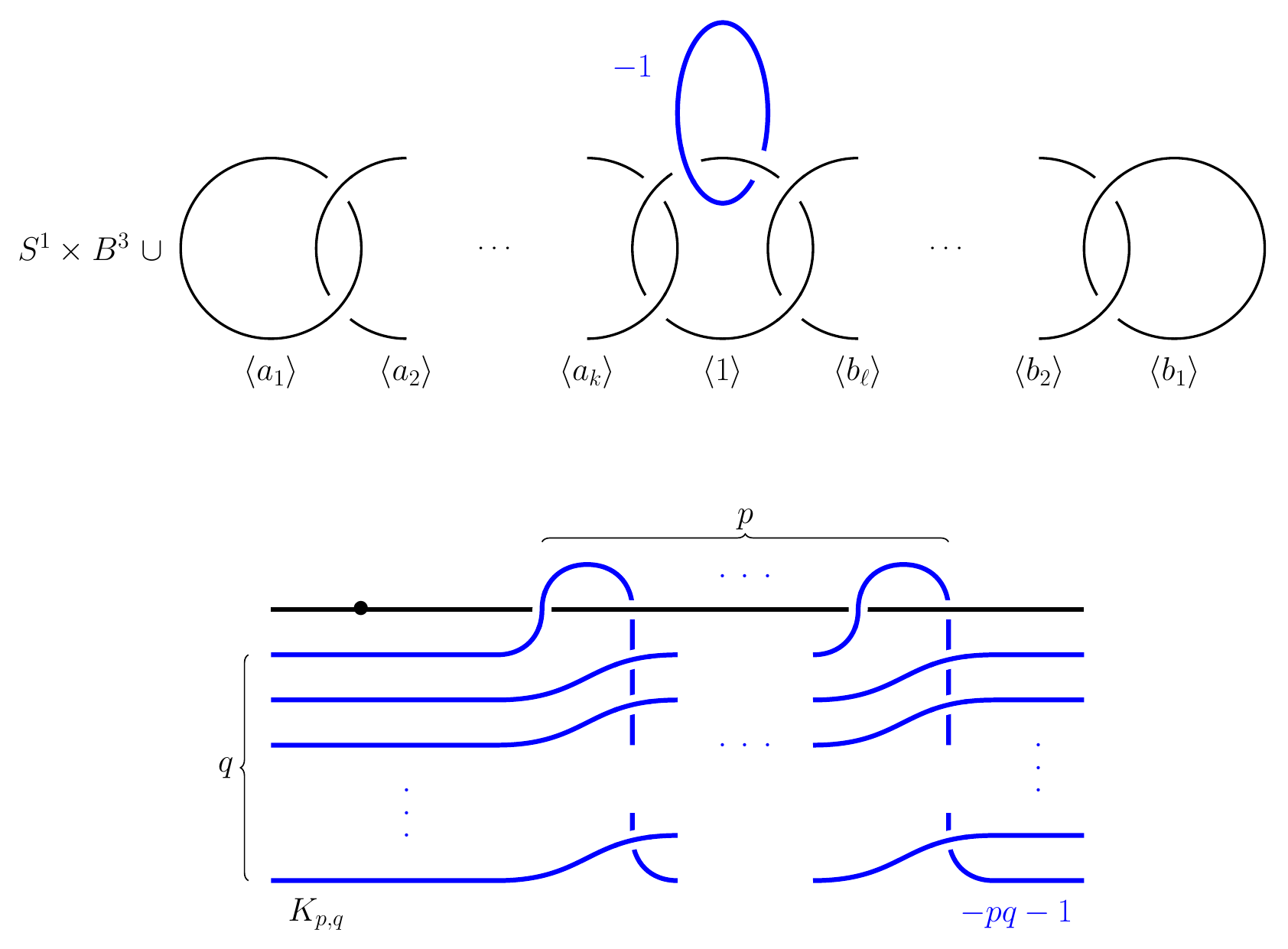}
\caption{{\bf Two Kirby diagrams of the rational homology ball $B_{p,q}$ when $\gcd(p,q)=1$.} The second diagram should be closed like a braid closure, so that there is one dotted circle in black and one 2-handle attaching circle, which is a copy of the torus knot $T_{-p,q}$, in blue.  The framing on the 2-handle is $-1$ relative to the torus framing.}
\label{fig:Bpq}
\end{figure}

\begin{figure}[htbp]
\centering
\includegraphics[width=14cm]{./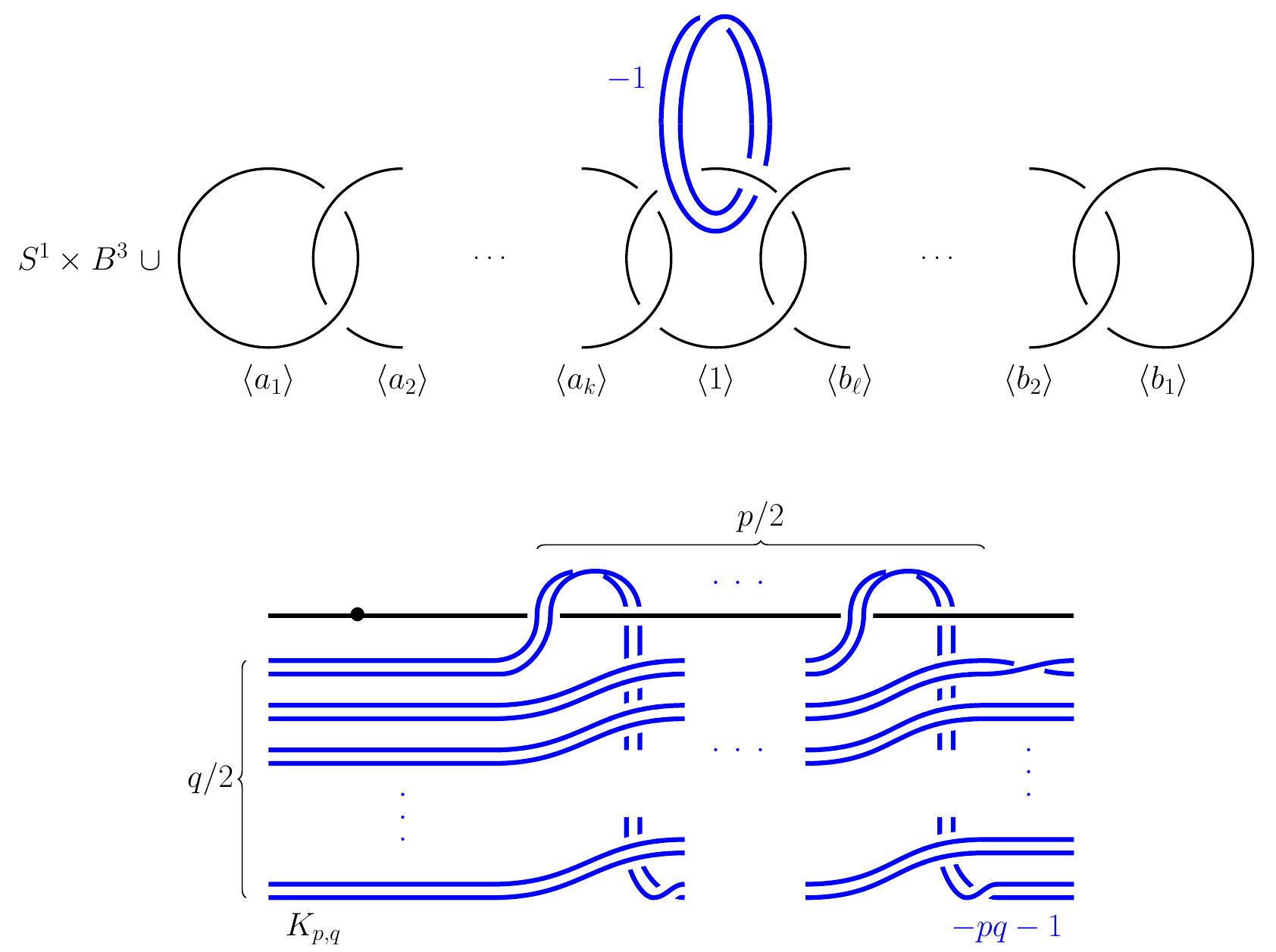}
\caption{{\bf Two Kirby diagrams of the rational homology ball $B_{p,q}$ when $\gcd(p,q)=2$.} The blue attaching circle for the 2-handle in the second diagram is the $(2,-1)$ cable, relative to the torus framing, of the torus knot $T_{-p/2,q/2}$.}
\label{fig:Apq}
\end{figure}

\begin{lemma}\label{lem:Bpq}
If $p$ and $q$ are coprime, then the two Kirby diagrams shown in \Cref{fig:Bpq}  represent the same diffeomorphism type of rational homology ball, with boundary $L(p^2,pq-1)$.

If $\gcd(p,q)=2$, then the diagrams shown in \Cref{fig:Apq}  represent the same diffeomorphism type of rational homology ball, with boundary $L(p^2,pq-1)$.
\end{lemma}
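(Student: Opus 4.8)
The plan is to verify the two claims (boundary identification and agreement of the two diagrams) separately, treating the coprime case in detail and then indicating the modifications for $\gcd(p,q)=2$. For the boundary computation, I would start from the \emph{first} diagram in each figure, which is the explicit ``linear plumbing with a 1-handle'' picture: a dotted circle (the 1-handle) together with a chain of 2-handles whose framings are read off from the Hirzebruch--Jung continued fraction $p/q=[a_1,\dots,a_k]$. Converting the 1-handle to a 0-framed 2-handle and then doing slam-dunks / Rolfsen twists along the chain, one computes the linking matrix and its determinant; the standard continued-fraction identities (together with the complementary expansion $p/(p-q)=[b_1,\dots,b_\ell]$, which governs the ``dual'' chain) give that the boundary is the lens space obtained by $-p^2/(pq-1)$-surgery on the unknot, i.e.\ $L(p^2,pq-1)$. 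That the resulting 4-manifold is a rational homology ball is immediate from the handle count: one 1-handle and one 2-handle, with a linking form presentation of order $p^2$ and first homology $\Zn{p}$, so $b_2=0$ rationally. (Alternatively, as the authors note, one can invoke \cite[Lemma 2.16]{ER} or the symplectic argument of \cite{lm} for the coprime case; I would still want to record the direct Kirby-calculus proof so that the torus-knot description in the second diagram is pinned down, since that description is what the rest of the paper uses.)

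The heart of the lemma is showing the two diagrams in each figure are related by Kirby moves. Here I would argue in the direction ``second $\Rightarrow$ first'': start with $S^1\times D^3$ (the dotted circle) and the 2-handle attached along $K_{p,q}$, which by construction is $T_{-p,q}$ sitting on a standardly embedded torus at distance $r$ from the dotted circle, with framing $-1$ relative to the torus (Seifert) framing. The torus bounds a solid torus on each side in $S^1\times S^2 = \partial(S^1\times D^3)$; pushing $K_{p,q}$ into one side exhibits it as lying in a genus-one Heegaard surface, and one can then isotope it to a standard braid-like position and perform $p-1$ (or $q-1$) successive handleslides over the 1-handle, peeling off one strand at a time. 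Each slide replaces the single torus-knot 2-handle by a 2-handle with one fewer strand plus a new small 2-handle; after all slides one is left exactly with the linear chain of the first diagram, with framings matching the continued fraction expansion because the framing arithmetic of an $n$-strand cable sliding off one strand at a time produces precisely the continued-fraction recursion. This is essentially the ``reduce everything to handleslides near a 2-torus in $\partial(S^1\times D^3)$'' principle flagged in the introduction, and Lisca--Parma's horizontal decompositions \cite{HD2} provide the template; I would cite those and the Kirby diagrams of \cite{EMPR} rather than reprove the combinatorics from scratch.

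For $\gcd(p,q)=2$, write $p=2p'$, $q=2q'$ with $p',q'$ coprime; then $K_{p,q}$ is the $(2,-1)$-cable (relative to the torus framing) of $T_{-p',q'}$, living in an $\varepsilon$-neighbourhood of the torus at distance $r$. The same handleslide procedure applies: first slide the cable over the 1-handle to reduce $T_{-p',q'}$ to a chain as before, carrying the $(2,-1)$-cabling along; the cabling contributes the extra continued-fraction entries recorded by $p/(p-q)=[b_1,\dots,b_\ell]$ versus the coprime case, and one checks the final framed link is the first diagram of \Cref{fig:Apq}. The boundary computation goes through verbatim and again yields $L(p^2,pq-1)$ (using the conventions $p/q=[1]$, $p/(p-q)=[\,]$ in the degenerate case $(p,q)=(2,2)$, i.e.\ $B_{2,2}$, which one should check by hand as a base case).

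\textbf{Main obstacle.} The delicate part is the bookkeeping in the handleslide argument: tracking how the framing of the large torus-knot (or cabled-torus-knot) 2-handle evolves under each slide over the 1-handle, and confirming that the emitted small 2-handles acquire exactly the framings $a_1,\dots,a_k$ (and $b_1,\dots,b_\ell$) dictated by the Hirzebruch--Jung expansions, with all signs and the ``$-1$ relative to torus framing'' convention consistent. Getting the induction on the number of strands set up cleanly — so that the inductive step is a single, uniform handleslide — is where most of the care is needed; once that is in place, the determinant/linking-form computation identifying the boundary as $L(p^2,pq-1)$ is routine continued-fraction manipulation.
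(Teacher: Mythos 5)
Your high-level plan (identify the boundary via a continued-fraction/linking-matrix calculation, and relate the two diagrams by explicit moves guided by a continued-fraction recursion) matches the paper's in spirit, and your boundary computation is essentially what the paper does (it blows down the blue unknot and quotes \cite[Lemma 3.1]{balls}). But the central step of your equivalence argument does not work as stated. You propose to perform $p-1$ (or $q-1$) ``successive handleslides over the 1-handle, peeling off one strand at a time'', each slide ``replacing the single torus-knot 2-handle by a 2-handle with one fewer strand plus a new small 2-handle''. This is not a Kirby move: a handle slide never changes the number of handles, and a 2-handle cannot be slid over a 1-handle in any case (slides take place between handles of the same index). Moreover, both diagrams in \Cref{fig:Bpq} contain exactly one 2-handle --- the chain in the first diagram consists of \emph{bracketed} circles, i.e.\ a surgery presentation of the ambient boundary $S^1\times S^2$ of the 1-handlebody, not of additional 2-handles --- so no procedure that ``emits'' new 2-handles can be what is required.

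The paper's mechanism is genuinely different: it keeps the single blue 2-handle throughout and instead modifies the surgery presentation of $\partial(S^1\times B^3)$ by bracketed $+1$-blow-ups along meridians and longitudes of the Heegaard torus carrying $K_{p,q}$. Each such blow-up acts on the torus neighbourhood as a Dehn twist, implementing one step $(p,q)\mapsto(p-q,q)$ or $(p,q)\mapsto(p,q-p)$ of the Euclidean algorithm; the number of blow-ups at each stage is the partial quotient $n_i$ of the \emph{Euclidean} continued fraction $p/q=[n_1,\dots,n_m]^+$ (not the Hirzebruch--Jung one), the final framing $-1$ falls out of the identity $pq=n_1q^2+n_2r_1^2+\dots+n_m$, and the resulting bracketed chain matches the first diagram via the conversion formulas of \cite[Propositions 2.3, 2.7]{PPP}. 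Your strand-peeling recursion would change $(p,q)$ by one unit at a time rather than by Euclidean steps, so even granting some legal implementation it would not reproduce the continued-fraction framings. The same issue propagates to your $\gcd(p,q)=2$ case: the paper's observation that a blow-up acts as a Dehn twist times the identity on the whole tubular neighbourhood of the torus is precisely what preserves the $(2,-1)$-cabling, and your argument offers no substitute for it.
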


\begin{proof}
We modify the second diagram in \Cref{fig:Bpq} by a sequence of blow-ups to obtain the first.  We begin by replacing the dotted circle by a bracketed 0-framed circle, yielding a diagram representing the attachment of a single 2-handle to the boundary of $S^1\times B^3$, similar to the first.  The blue attaching circle of the 2-handle is $T_{-p,q}$; it runs $p$ times around the longitude and $-q$ times around the meridian of a torus which is the boundary of a neighbourhood of the black circle.  We note that the same torus bounds a neighbourhood of a meridian of the black circle (not shown in the figure) and that the longitude and meridian form a positive Hopf link, so that either is a right-handed meridian of the other.  This yields the standard identification $T_{-p,q}\simeq T_{-q,p}$.  Thus a positive blow-up along a meridian external to this torus modifies the knot type of the blue curve from $T_{-p,q}$ to $T_{-p+q,q}$, and increases the framing by $q^2$.
Put another way, it results in
$$K_{p,q}\leadsto K_{p-q,q}.$$
It also increases the framing of an internal longitude by 1.
Similarly a positive blow-up along an internal longitude results in
$$K_{p,q}\leadsto K_{p,q-p},$$
whilst also increasing the framing of an external meridian by 1.

Suppose that the Euclidean continued fraction of $p/q$ is
$$\frac{p}{q}=[n_1,\dots,n_m]^+.$$
This corresponds to the following Euclidean algorithm for $p/q$:
\begin{align}\label{eq:Euc}
p&=n_1\times q+r_1\notag\\
q&=n_2\times r_1+r_2\\
&\,\,\,\vdots\notag\\
r_{m-2}&=n_m\times 1+0\notag.
\end{align}

This Euclidean algorithm will guide a series of bracketed $+1$-blow-ups which will convert the blue curve to a $-1$-framed unknot.  
We refer the reader to \Cref{fig:B2310} for an example.
We begin by blowing up along an external meridian.  As noted above, this converts the framed blue curve to $K_{p-q,q}$.  We do this a total of $n_1$ times, each time sliding the previous $+1$-framed unknot over the new one.  This results in a blue $K_{r_1,q}$.  The framing sequence on the chain of linked bracketed circles is modified as follows:
$$0 \leadsto 1,1 \leadsto 2,1,2 \leadsto \dots \leadsto n_1,1,2^{[n_1-1]},$$
where $2^{[n]}$ is short hand for a subsequence $2,2,\dots,2$ of length $n$.  The blue curve has linking number $\pm r_1$ with the $n_1$-framed bracketed curve and $\pm q$ with the $+1$-framed bracketed curve, which is an external meridian,  and is split disjoint from the others.

\begin{figure}[htbp]
\centering
\includegraphics[width=14cm]{./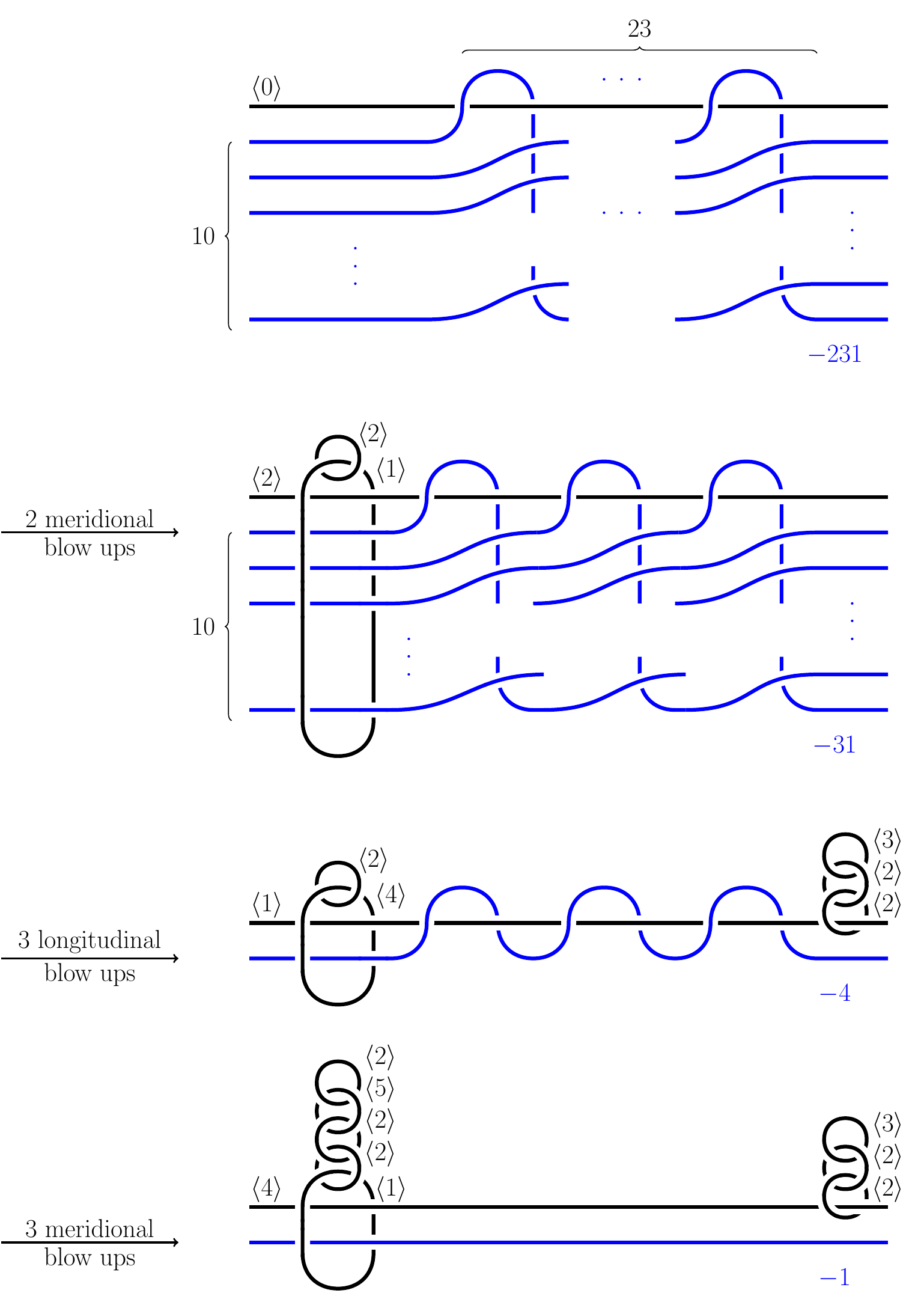}
\caption{{\bf Kirby diagrams of the rational homology ball $B_{23,10}$.}  In each case we take a union with $S^1\times B^3$.}
\label{fig:B2310}
\end{figure}

We then proceed to blow up along an internal longitude $n_2$ times, converting the blue curve to $K_{r_1,r_2}$.  The framing sequence on the bracketed circles gets modified as follows:
$$n_1,1,2^{[n_1-1]} \leadsto n_1+1,1,2,2^{[n_1-1]}  \leadsto  \dots \leadsto n_1+1,2^{[n_2-1]},1,n_2+1,2^{[n_1-1]}.$$
The blue curve has linking number $\pm r_2$ with the $n_2+1$-framed bracketed curve and $\pm r_1$ with the $+1$-framed bracketed curve, which is an internal longitude, and is split disjoint from the others.

We then blow up along an external meridian $n_3$ times, and so on.

After the final $n_m$ blow ups, the bracketed circles have framing sequence
$$
\begin{cases}
n_1+1,2^{[n_2-1]},n_3+2,2^{[n_4-1]}\dots,2^{[n_m-1]},1,n_m+1,2^{[n_{m-1}-1]},\dots,2^{[n_1-1]}& m \text{ even}\\
n_1+1,2^{[n_2-1]},n_3+2,2^{[n_4-1]}\dots,n_m+1,1,2^{[n_m-1]},n_{m-1}+2,\dots,2^{[n_1-1]}& m \text{ odd.}
\end{cases}
$$
 
 Moreover the blue attaching circle of the 2-handle is now a $-1$-framed meridian of the $+1$-framed bracketed circle.  The fact that the framing is $-1$ follows from the identity
 $$pq=n_1q^2+n_2r_1^2+\dots+n_{m-1}r_{m-2}^2+n_m,$$
 which in turn follows easily from \eqref{eq:Euc}.
 
 That this agrees with the first diagram in \Cref{fig:Bpq} follows from the formulas to convert from the Euclidean continued fraction of $p/q$ to the Hirzebruch--Jung continued fractions of $p/q$ and $p/(p-q)$ as in
\cite[Propositions 2.3, 2.7]{PPP}.

The proof in the case $\gcd(p,q)=2$ is entirely similar, again with a sequence of blow-ups guided by the Euclidean algorithm for $p/q=(p/2)/(q/2)$.
In order to see that the $(2,-1)$ cabling with respect to the torus framing is preserved, it helps to note that the effect of blowing up is the same as applying a Dehn twist times the identity to the tubular neighbourhood of the torus which contains the blue curve.

An example is shown in \Cref{fig:A4620}.

To see  that the boundary is $L(p^2,pq-1)$, we  interpret the first diagram in each case as a surgery diagram for a 3-manifold, removing the brackets and the $S^1\times B^3$.  Blow down the blue unknot in each case, which adds 1 or 4 to the framing of the linked bracketed circle.  Then see \cite[Lemma 3.1]{balls} for the case $\gcd(p,q)=1$.  The other case follows in entirely the same way, allowing $(p/2,q/2)$ to take values in the inverse Stern--Brocot tree; the base case with $(p,q)=(4,2)$ involves the continued fraction
$$\frac{16}{9}=[2,5,2],$$
and the rest of the proof is the same.
\end{proof}

\begin{figure}[htbp]
\centering
\includegraphics[width=14cm]{./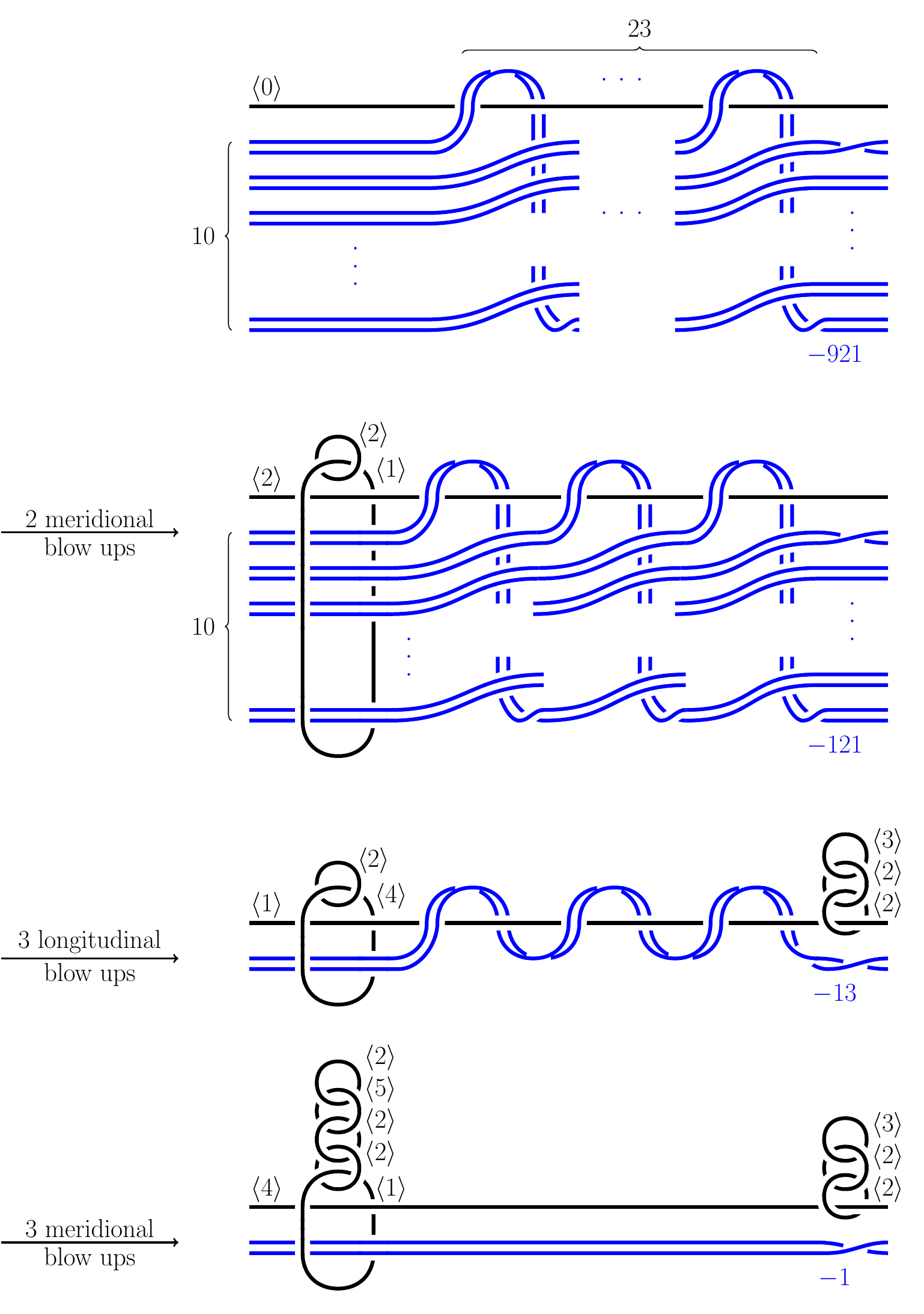}
\caption{{\bf Kirby diagrams of the rational homology ball $B_{46,20}$.}  In each case we take a union with $S^1\times B^3$.}
\label{fig:A4620}
\end{figure}

We finish this section by recording another lemma which is no doubt familiar to experts.  We generalise the definition of $B_{p,q}$ to allow $p$ and $q$ to be arbitrary integers, not both zero, with greatest common divisor $1$ or $2$: take the second diagram of either \Cref{fig:Bpq} or \ref{fig:Apq} as the definition in this case.

\begin{lemma}\label{lem:addp}
Given $p\ge q\ge0$ with $\gcd(p,q)\in\{1,2\}$ and any  integer $k$, we have
$$B_{p,q}\cong B_{p,kp\pm q}.$$
\end{lemma}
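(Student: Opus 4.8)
The plan is to use the fact that $B_{p,q}$ is built from $S^1\times B^3$ by attaching a single $2$-handle along $K_{p,q}$ with framing $-1$ relative to the torus framing, so that its diffeomorphism type depends only on the pair (attaching curve, framing) up to self-diffeomorphisms of $S^1\times B^3$: if $\Psi$ is a self-diffeomorphism of $S^1\times B^3$ carrying $(K_{p,q},\text{framing})$ to $(K_{p',q'},\text{framing})$, then $\Psi$ extends over the $2$-handle and yields $B_{p,q}\cong B_{p',q'}$. Thus the whole statement reduces to producing, for each $k$, a suitable self-diffeomorphism of $S^1\times B^3$ realising $q\mapsto kp\pm q$.

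First I would write $B^3$ as the unit ball in $\C\times\R$, so that the dotted circle $C$ is (isotopic in $\partial(S^1\times B^3)$ to) $S^1\times\{(0,1)\}$, with torus $T=\partial N(C)$ and with $\ell,m\in H_1(T)$ the longitude and meridian of $C$, so that $K_{p,q}=p\ell-qm$. Then introduce the ``rolling'' diffeomorphism
\[
\Phi_k\colon S^1\times B^3\longrightarrow S^1\times B^3,\qquad
\Phi_k(\theta,z,t)=(\theta,\;e^{2\pi i k\theta}z,\;t),
\]
a full rotation of the $B^3$-factor about the polar axis, spun $k$ times as one goes once around $S^1$. This is well defined (since $e^{2\pi i k}=1$) and a diffeomorphism with inverse $\Phi_{-k}$; it fixes $C$, preserves the torus $T$ and indeed every torus at distance $r$ from $C$ (these are swept out by the rotation-invariant circles of latitude of $\partial B^3$), and hence preserves the torus framing. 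The one point requiring a computation is the action of $\Phi_k$ on $H_1(T)=\Z\ell\oplus\Z m$: a longitudinal circle acquires $k$ meridional windings while meridional circles go to themselves, so $\ell\mapsto\ell+km$, $m\mapsto m$. Therefore $\Phi_k$ sends $K_{p,q}=p\ell-qm$ to $p\ell+(kp-q)m=K_{p,q-kp}$; and since it preserves the torus framing it commutes with the $(2,-1)$-cabling, so the same conclusion holds when $\gcd(p,q)=2$. This gives $B_{p,q}\cong B_{p,q-kp}$ for every $k\in\Z$, i.e.\ the ``$kp+q$'' half of the statement after replacing $k$ by $-k$.

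For the ``$kp-q$'' half it suffices to know $B_{p,q}\cong B_{p,-q}$ and then compose with the previous step; this is realised by the reflection $(\theta,z,t)\mapsto(-\theta,z,t)$ of $S^1\times B^3$, which fixes $C$ and $T$, preserves the torus framing, and acts on $H_1(T)$ by $\ell\mapsto-\ell$, $m\mapsto m$. Since $K_{p,q}$ is unoriented this sends $K_{p,q}=p\ell-qm$ to $-p\ell-qm\simeq p\ell+qm=K_{p,-q}$ (and it evidently commutes with the cabling). Combining, $B_{p,q}\cong B_{p,-q}\cong B_{p,kp-q}$, which together with the previous paragraph yields $B_{p,q}\cong B_{p,kp\pm q}$.

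The routine but slightly delicate part — the step I would take the most care over — is the bookkeeping: fixing once and for all the orientation conventions for $\ell$ and $m$ and the sign in ``$T_{-p,q}$'', and checking that ``$-1$ relative to the torus framing'' is intrinsic to the pair $(K_{p,q},T)$, hence automatically transported by any diffeomorphism preserving $T$, regardless of its effect on the orientation of $T$ (and, in the minus case, keeping track of orientations if an orientation-preserving statement is wanted). Beyond that there is no geometric obstacle: $\Phi_k$ and the reflection are manifestly diffeomorphisms of $S^1\times B^3$, the cabling description is manifestly preserved, and the remaining content is the elementary computation of the induced maps on $H_1(T)$.
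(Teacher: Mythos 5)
Your rolling diffeomorphism $\Phi_k$ correctly handles the ``$+$'' half of the statement, and it is in fact the geometric realisation of exactly the move the paper uses there (a Rolfsen twist along the $\langle0\rangle$-framed circle replacing the dotted circle): $\Phi_k$ is orientation-preserving, preserves each torus $S^1\times S_\zeta$ together with the side it bounds, hence preserves both the $(2,-1)$-cabling and the framing measured relative to the torus framing, and sends $K_{p,q}$ to $K_{p,q-kp}$. So $B_{p,q}\cong B_{p,kp+q}$ is established, by essentially the same argument as in the paper.

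The gap is in the ``$-$'' half. The reflection $(\theta,z,t)\mapsto(-\theta,z,t)$ is orientation-\emph{reversing} on $S^1\times B^3$ and on its boundary $S^1\times S^2$, and the assertion you defer to the ``bookkeeping'' --- that ``$-1$ relative to the torus framing'' is intrinsic to $(K_{p,q},T)$ and is transported by any diffeomorphism preserving $T$ regardless of orientations --- fails precisely here: the integer difference of two framings of a knot is independent of the orientations of the knot and of $T$, but it \emph{changes sign} when the ambient orientation is reversed. Consequently your reflection carries the attaching data $(K_{p,q},\ \text{torus framing}-1)$ to $(K_{p,-q},\ \text{torus framing}+1)$ inside an orientation-reversed copy of $S^1\times B^3$; composing with the mirror $(\theta,z,t)\mapsto(\theta,\bar z,t)$ to restore the standard orientation sends this back to $(K_{p,q},\ \text{torus framing}-1)$, so the argument only yields the tautology $B_{p,q}\cong B_{p,q}$ (equivalently, it identifies $B_{p,q}$ with the \emph{reverse-oriented} $+1$-torus-framed handlebody on $K_{p,-q}$, which is not $B_{p,-q}$). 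Since the lemma is used in the paper for oriented statements, this matters. The fix is to use an orientation-preserving identification: either the paper's route, which reads the linear-chain diagram of Figure~\ref{fig:Bpq} (resp.\ Figure~\ref{fig:Apq}) backwards to obtain $B_{p,q}\cong B_{p,p-q}$ directly, or, staying in your model, the orientation-preserving involution $(\theta,z,t)\mapsto(-\theta,z,-t)$, which exchanges the dotted circle with its dual circle, carries $T_\zeta$ to $T_{-\zeta}$ while preserving the ambient orientation (hence the relative framing), and sends $K_{p,q}$ to $K_{p,-q}$. With that replacement the rest of your argument goes through and is parallel to the paper's proof.
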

\begin{proof}
First note that by reversing the first diagram in either \Cref{fig:Bpq} or \ref{fig:Apq}, we see that $B_{p,q}\cong B_{p,p-q}$.

Now take $k\in\Z$.  Start with $B_{p,kp+q}$ as in the second diagram of either \Cref{fig:Bpq} or \ref{fig:Apq}, with the dotted circle replaced by a $\langle0\rangle$-framed circle as in the proof of Lemma $\ref{lem:Bpq}$.  Then the  diffeomorphism $B_{p,q}\cong B_{p,kp+ q}$ follows by performing Rolfsen twists \cite[Section 5.3]{GS} along this $\langle0\rangle$-framed circle.
%
%
\end{proof}


\section{The 2-Farey tree}\label{s:2Fareytree}

We define a variant of the classical Farey tree \cite{aigner}, which we refer to as the 2-Farey tree.  This is a binary tree with ordered triples of fractions, called 2-Farey triples, at each node.  The triple at the root is $\left(\frac10,\frac32,\frac22\right)$ and the recursive rule is given by 

{\centering
\includegraphics[width=7cm]{./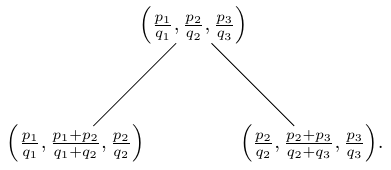}
\par}
\noindent

The first three rows of the tree, and two further nodes, are shown below.

{\centering
\includegraphics[width=10cm]{./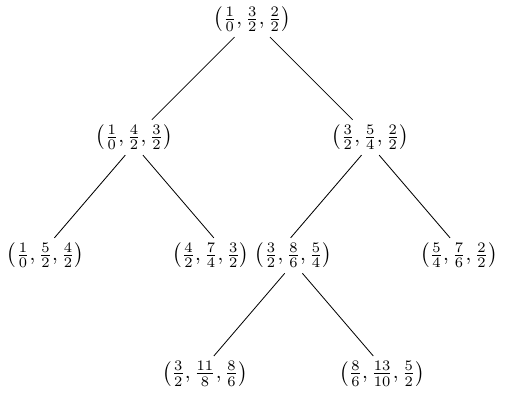}
\par}
\noindent
Observe that each triple contains two reduced fractions and one fraction $p/q$ with $\gcd(p,q)=2$.

For comparison, part of the standard Farey tree is shown below.  It uses the same recursive rule as above but the triple at the root is $\left(\frac10,\frac11,\frac01\right)$.

{\centering
\includegraphics[width=10cm]{./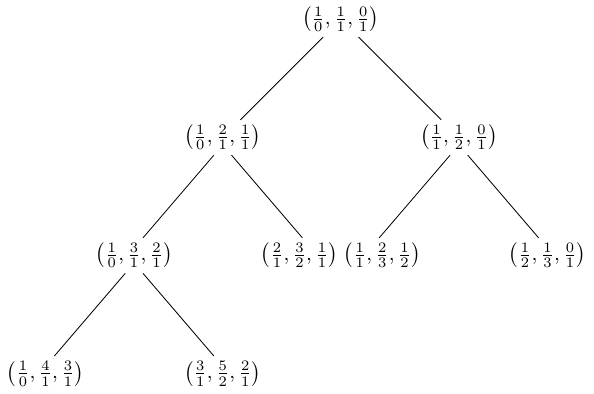}
\par}
\noindent
Note that the 2-Farey tree is obtained from the subtree of the Farey tree on the left, beginning at $\left(\frac10,\frac31,\frac21\right)$, by multiplying all denominators by 2.

The following theorem  follows immediately from the corresponding facts for the classical Farey tree found in the wonderful book by Aigner \cite[Section 3.2]{aigner}.

%

\begin{thm}\label{thm:2Farey}
Every fraction $p/q$ with $q$ even, $p\ge q$, and $\gcd(p,q)\le2$ appears in the $2$-Farey tree.  Every such fraction with $p>2$ is generated as a mediant exactly once.  For each $2$-Farey triple $\left(\dfrac{p_1}{q_1},\dfrac{p_2}{q_2},\dfrac{p_3}{q_3}\right)$, we have  
\begin{equation}\label{eqn:2Farey}
p_2=p_1+p_3,\quad q_2=q_1+q_3,\quad \mathrm{and}\quad p_iq_{i+1}-p_{i+1}q_i=2\ \mathrm{for}\  i\in\{1,2\}.  
\end{equation}
Furthermore:

\begin{enumerate}[(i)]
\item If $p_1>q_1$ and $p_2>q_2$ satisfy $q_i$ even and $p_1q_2-p_2q_1=\pm2$, then $p_1/q_1$ and $p_2/q_2$ are contained in a $2$-Farey triple.
\item For any coprime natural numbers $p_1, p_2$, there exist unique even numbers $q_1$ and $q_2$ satisfying
$0\le q_i\le p_i$ and $p_1q_2-p_2q_1=\pm2$.  After possibly relabeling, we may take $p_1q_2-p_2q_1=2$, and then $\left(\dfrac{p_1}{q_1},\dfrac{p_1+p_2}{q_1+q_2},\dfrac{p_2}{q_2}\right)$ is a $2$-Farey triple.
\end{enumerate}
\end{thm}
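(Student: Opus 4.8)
The plan is to derive everything from the corresponding statements for the classical Farey tree, using the explicit correspondence already noted in the text: the 2-Farey tree is obtained from the subtree of the Farey tree rooted at $\left(\frac31,\frac51,\frac21\right)$'s parent $\left(\frac31,\frac41,\frac21\right)$ — more precisely from the subtree rooted at $\left(\frac31,\frac31+?\right)$ — wait, as stated, from the subtree rooted at $\left(\frac10,\frac31,\frac21\right)$, by doubling all denominators. So first I would set up this dictionary carefully: a 2-Farey triple $\left(\frac{p_1}{q_1},\frac{p_2}{q_2},\frac{p_3}{q_3}\right)$ corresponds to the classical Farey triple $\left(\frac{p_1}{q_1/2},\frac{p_2}{q_2/2},\frac{p_3}{q_3/2}\right)$, and conversely; here one must check that the root $\left(\frac10,\frac32,\frac22\right)$ maps to $\left(\frac10,\frac31,\frac21\right)$ and that the mediant rule is compatible with halving denominators (it is, since the mediant rule is linear). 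The relations \eqref{eqn:2Farey} then follow from the classical identities $p_2=p_1+p_3$, $q_2'=q_1'+q_3'$, $p_iq_{i+1}'-p_{i+1}q_i'=1$ upon multiplying the $q$-identities by $2$.

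Next I would handle the enumeration statements. The claim that every fraction $p/q$ with $q$ even, $p\ge q$, $\gcd(p,q)\le 2$ appears exactly once as a mediant (for $p>2$) is exactly the classical fact — every reduced fraction $p/q'$ with $p\ge q'\ge 1$ (equivalently in the relevant branch of the Farey tree) appears exactly once as a mediant — transported through the halving correspondence; the condition $\gcd(p,q)\le 2$ with $q$ even translates precisely to $\gcd(p,q/2)=1$, i.e. $p/q'$ reduced, so the translation is clean, modulo bookkeeping of the small cases $p\le 2$ that sit at or near the root. For part (ii): given coprime $p_1,p_2$, I would first invoke the classical Stern–Brocot/Farey fact that there are unique $q_1',q_2'$ with $0\le q_i'\le p_i$ and $p_1q_2'-p_2q_1'=\pm 1$, and that after relabelling to get $+1$ the triple $\left(\frac{p_1}{q_1'},\frac{p_1+p_2}{q_1'+q_2'},\frac{p_2}{q_2'}\right)$ is a classical Farey triple; then set $q_i=2q_i'$ and apply the correspondence. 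Uniqueness of the even $q_i$ with $p_1q_2-p_2q_1=\pm 2$ is equivalent to uniqueness of $q_i'$ with $p_1q_2'-p_2q_1'=\pm 1$, so nothing new is needed. Part (i) is similar: if $p_1>q_1$, $p_2>q_2$, $q_i$ even, and $p_1q_2-p_2q_1=\pm 2$, then dividing, $p_1(q_2/2)-p_2(q_1/2)=\pm 1$ with $0<q_i/2<p_i$, so $\left(\frac{p_1}{q_1/2},\frac{p_2}{q_2/2}\right)$ lies in a classical Farey triple, and halving-inverse gives the 2-Farey triple containing $\frac{p_1}{q_1}$ and $\frac{p_2}{q_2}$.

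The one genuine subtlety — the step I expect to be the main obstacle — is making the reduction to Aigner's book airtight at the boundary of the relevant subtree, i.e. justifying that the Farey tree restricted to the branch rooted at $\left(\frac10,\frac31,\frac21\right)$ captures exactly the reduced fractions $p/q'$ with $p\ge q'\ge 1$ and $p\ge 3$ (or however the small cases are to be phrased), without double-counting or missing anything, and correctly tracking which of the three entries of each triple is the mediant. Aigner states his results for the full Stern–Brocot / Farey tree on $[0,\infty)$ or $[0,1]$, so I would need a short argument — probably just that the branch in question is itself a complete copy of a Farey-type tree with a shifted root, together with the observation that $1/0,1/1,0/1$ and their immediate descendants account for the fractions with $p\le 2$ — to legitimately say "follows immediately from Aigner". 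Concretely I would phrase this as: restricting to the left branch and then the further left-or-right branches produces all of $\{p/q' : \gcd(p,q')=1,\ 1\le q'\le p\}$ with the stated multiplicities, which is standard; the rest is the elementary algebra above. I would keep this paragraph brief in the final write-up, citing \cite[Section 3.2]{aigner} for the classical facts and spelling out only the halving dictionary in detail.
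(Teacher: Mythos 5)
Your overall strategy is exactly the paper's: the published proof consists of the single sentence that the theorem ``follows immediately from the corresponding facts for the classical Farey tree'' in Aigner, combined with the observation, made just before the theorem, that the $2$-Farey tree is the subtree of the Farey tree rooted at $\left(\frac10,\frac31,\frac21\right)$ with all denominators doubled. So writing out the halving dictionary is the right plan, and your treatment of the relations \eqref{eqn:2Farey} and of part (i) is fine (in part (i) the hypothesis $p_1q_2-p_2q_1=\pm2$ forces $\gcd(p_i,q_i/2)=1$ automatically, so the dictionary applies cleanly there).

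However, two of your translation steps are wrong, and both are instances of the ``boundary of the subtree'' issue that you correctly flag as the main obstacle but do not actually resolve. First, the asserted equivalence ``$q$ even and $\gcd(p,q)\le 2$ translates precisely to $\gcd(p,q/2)=1$'' fails in one direction: $(p,q)=(6,4)$ has $q$ even and $\gcd(p,q)=2$ but $\gcd(p,q/2)=2$, and indeed $\frac{6}{4}$ does not appear in the $2$-Farey tree (the only fractions there with denominator $4$ are $\frac{p}{4}$ with $p$ odd). The fractions in the tree are exactly those with $\gcd(p,q/2)=1$, which is strictly stronger than $\gcd(p,q)\le2$ whenever $p$ is even and $4\mid q$; your write-up should prove this corrected characterisation (which is what the paper actually uses: for $\gcd(p,q)=2$ one of $q$, $p-q$ is $\equiv 2\pmod 4$, which is why the deduction of Corollary~\ref{cor:Bpq} goes through), rather than the literal first sentence. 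Second, in part (ii) you invoke ``unique $q_1',q_2'$ with $0\le q_i'\le p_i$ and $p_1q_2'-p_2q_1'=\pm1$'' and then set $q_i=2q_i'$. Uniqueness fails in that range --- for $(p_1,p_2)=(2,3)$ both $(q_1',q_2')=(1,1)$ and $(1,2)$ satisfy it --- and doubling the wrong solution violates the required bound $q_i\le p_i$. The classical fact you need is the one adapted to the branch of the Farey tree consisting of fractions $\ge 2$, namely existence and uniqueness of $q_i'$ with $0\le q_i'\le p_i/2$ and $p_1q_2'-p_2q_1'=\pm1$; only with that version does $q_i=2q_i'$ land in the stated range. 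Neither repair is difficult, but as written these two steps do not follow from the facts you cite.
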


\begin{rmk}
One could alternatively choose the triple $\left(\frac20,\frac31,\frac11\right)$ at the root of the 2-Farey tree.  The resulting tree would be obtained from that above by replacing each fraction $p/q$ by $p/(p-q)$, and reflecting the tree and each triple from left to right.  It contains all triples $\left(\dfrac{p_1}{q_1},\dfrac{p_2}{q_2},\dfrac{p_3}{q_3}\right)$ with $p_i\equiv q_i \pmod2$ and  satisfying \eqref{eqn:2Farey}.
Since by \Cref{lem:addp} we have $B_{p,q}=B_{p,p-q}$, this would result in the same embeddings of rational balls in $\CPb$ in the next section.

We could also consider an expanded 2-Farey tree obtained by multiplying denominators by 2 in all nodes of the full Farey tree shown above.  It again follows from \Cref{lem:addp} that this does not yield any more embedding results than those coming from the 2-Farey tree we have selected, though we will see that for the base of our induction, it is helpful to include two more nodes at the base of the tree.
\end{rmk}


\section{Embeddings of 2-Farey triples}\label{s:ANN}

Given an ordered $n$-tuple $\left(\dfrac{p_1}{q_1},\dfrac{p_2}{q_2},\dots,\dfrac{p_n}{q_n}\right)$, with $\gcd(p_i,q_i)\in\{1,2\}$, we define a Kirby diagram of a manifold $X=X_{\frac{p_1}{q_1},\frac{p_2}{q_2},\dots,\frac{p_n}{q_n}}$ as follows.  Let $r_i=1-i/(n+1)$ for $i=1,\dots,n$, so that
$$1>r_1>r_2>\dots>r_n>0,$$
and $\varepsilon=1/(4n)$, so that the intervals $[r_i-\varepsilon,r_i+\varepsilon]$ are pairwise disjoint in $(0,1)$.

The manifold $X$ is a handlebody with one $0$-handle, one $1$-handle, and $n$ $2$-handles.  To draw the diagram, start with a dotted circle drawn as a circle of diameter 3.  For each $1\le i\le n$, draw the framed knot $K_{p_i,q_i}$ from \Cref{s:balls} in an $\varepsilon$-neighbourhood of the torus which is distance $r_i$ from the dotted circle.  An example is shown in Figure \ref{fig:Base}.

\begin{figure}[htbp]
\centering
\includegraphics[width=14cm]{./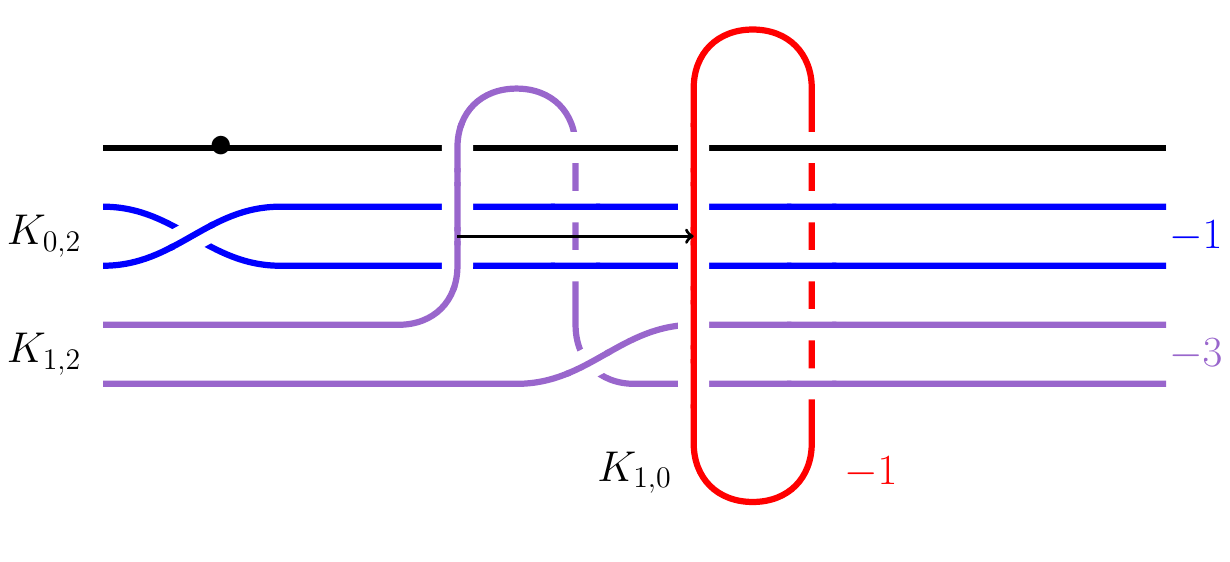}
\caption{{\bf The manifold $X_{\frac10,\frac12,\frac02}$.} The black arrow indicates a handle slide that may be used to simplify the diagram.}
\label{fig:Base}
\end{figure}

\begin{lemma}\label{lem:LPembed}
For any $\left(\dfrac{p_1}{q_1},\dfrac{p_2}{q_2},\dots,\dfrac{p_n}{q_n}\right)$, with $\gcd(p_i,q_i)\in\{1,2\}$, the disjoint union
$$\bigsqcup_{i=1}^n B_{p_i,q_i}$$
embeds smoothly in $X_{\frac{p_1}{q_1},\frac{p_2}{q_2},\dots,\frac{p_n}{q_n}}$.
\end{lemma}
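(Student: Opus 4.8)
The plan is to cut the $1$-handlebody $N=S^1\times D^3$ of $X$ into $n$ disjoint product pieces along properly embedded solid tori sitting just inside the tori $T_{r_i}$, and to recognise each piece, together with the corresponding $2$-handle of $X$, as a copy of $B_{p_i,q_i}$. To set this up, write $D^3\cong B^2\times[0,1]$, so that $N\cong S^1\times B^2\times[0,1]$; for each $t\in(0,1)$ the solid torus $S^1\times B^2\times\{t\}$ is properly embedded in $N$, has boundary the Heegaard torus $S^1\times\partial B^2\times\{t\}$ of $\partial N=S^1\times S^2$, and cutting $N$ along it gives two pieces each of the form $S^1\times B^2\times I\cong S^1\times D^3$. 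The geometric input I would use is that, after choosing this identification suitably, the tori $T_{r_i}$ of the Kirby diagram become, up to isotopy, $n$ such slicing tori $S^1\times\partial B^2\times\{\tau_i\}$ with $0<\tau_1<\dots<\tau_n<1$: as drawn in \Cref{fig:Base}, the $T_{r_i}$ are unknotted, mutually parallel, nested tori in $\partial N$ whose bounding solid tori on one side all contain the core circle of $N$, and such a configuration is standard.

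Granting this, cutting $N$ along the corresponding slicing solid tori produces pieces $P_0,P_1,\dots,P_n$, each diffeomorphic to $S^1\times D^3$, with $P_i=S^1\times B^2\times[\tau_i,\tau_{i+1}]$ (setting $\tau_0=0$, $\tau_{n+1}=1$), and with $K_{p_i,q_i}$ lying, after a small isotopy, near a Heegaard torus of $\partial P_i$ that is isotopic in $\partial P_i$ to the boundary of a tubular neighbourhood of the core circle of $P_i$. By construction (see \Cref{s:balls}), relative to that torus $K_{p_i,q_i}$ is precisely the framed knot---the torus knot $T_{-p_i,q_i}$ when $\gcd(p_i,q_i)=1$, or the $(2,-1)$-cable of the torus knot $T_{-p_i/2,q_i/2}$ when $\gcd(p_i,q_i)=2$, in each case with framing $-1$ relative to the torus framing---that serves as the attaching circle for the $2$-handle of $B_{p_i,q_i}$ in \Cref{fig:Bpq,fig:Apq}. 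Hence the piece $P_i$, together with the $2$-handle of $X$ attached along $K_{p_i,q_i}$, is diffeomorphic to $B_{p_i,q_i}$. Note that no appeal to \Cref{lem:addp} is needed here, since the curve $K_{p_i,q_i}$ is produced by the same recipe in $X$ as in $B_{p_i,q_i}$.

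To finish, I would shrink each $P_i$ (for $1\le i\le n$) slightly in the $[0,1]$-direction, by less than the distance the isotopy above moved $K_{p_i,q_i}$ off $T_{r_i}$; the resulting pieces are pairwise disjoint and still contain $K_{p_i,q_i}$ in their boundaries. Each $2$-handle of $X$ meets $X$ only in a neighbourhood of its attaching circle $K_{p_i,q_i}$, which lies in the interior of the slab of heights belonging to $P_i$, so that $2$-handle is disjoint from $P_j$ and from the $2$-handle along $K_{p_j,q_j}$ whenever $j\neq i$. Therefore, for each $i$, the union of $P_i$ with the $2$-handle of $X$ attached along $K_{p_i,q_i}$ is a submanifold of $X$ diffeomorphic to $B_{p_i,q_i}$, and these $n$ submanifolds are pairwise disjoint; this is the required embedding.

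The step I expect to be the main obstacle is the first one: verifying carefully that the tori $T_{r_i}$, as they sit in $\partial N=S^1\times S^2$, form a standard nested family of Heegaard tori bounding a nested family of solid-torus neighbourhoods of the core of $N$, so that slicing $N$ along them genuinely yields product pieces $S^1\times D^3$ carrying $K_{p_i,q_i}$ in the standard $B_{p_i,q_i}$-position of \Cref{fig:Bpq,fig:Apq}. This is visually evident from the concentric-tori picture of \Cref{fig:Base}, but turning it into a clean argument (e.g.\ via uniqueness of the genus-one Heegaard splitting of $S^1\times S^2$) is where the care lies; the remaining steps---matching the two descriptions of $B_{p_i,q_i}$ and arranging disjointness---are routine.
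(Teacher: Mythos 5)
Your proof is correct and follows essentially the same route as the paper's: both arguments identify $n$ disjoint copies of $S^1\times D^3$ inside the dotted-circle handlebody (the paper takes thin slabs $S^1\times B^3_i$ around each level torus, you cut along the level tori and shrink), each containing the attaching region of one $2$-handle, so that each piece together with its $2$-handle is a copy of $B_{p_i,q_i}$. The step you flag as the main obstacle is not actually one: the tori $T_{r_i}$ are by \emph{definition} the distance-$r_i$ level tori of the dotted circle, hence standard concentric Heegaard tori by construction, so no appeal to uniqueness of genus-one Heegaard splittings of $S^1\times S^2$ is required.
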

\begin{proof}
This follows exactly as in the proof of \cite[Lemma 4.1]{HD2}.  We give a sketch here for convenience.  Consider the 4-manifold given by a single dotted circle, as found in each of $X_{\frac{p_1}{q_1},\frac{p_2}{q_2},\dots,\frac{p_n}{q_n}}$ and $B_{p_i,q_i}$.  This is a copy of $S^1\times B^3$.  For convenience, view $B^3$ as the unit ball in $\R^3$.  Then a torus of constant distance $r$ from the dotted circle corresponds to $S^1\times S_\zeta$, where $S_\zeta$ is the intersection of the unit sphere with the plane $z=\zeta$.  Here we  take $\zeta(r)$ to be a monotone decreasing bijection from $(0,\infty)$ to $(-1,1)$.

For each $1\le i\le n$, the set of points $(x,y,z)\in B^3$ with $z\in [\zeta(r_i-\varepsilon),\zeta(r_i+\varepsilon)]$ is again a 3-ball, after smoothing corners, which we may denote by $B^3_i$.  The disjoint union $\bigsqcup_{i=1}^n S^1\times B^3_i$ is embedded in $S^1\times B^3$, and since the 2-handle for each $B_{p_i,q_i}$ is attached to the boundary of $S^1\times B^3_i$, the statement follows.
\end{proof}

\begin{prop}\label{prop:2Femb}
For each triple $\left(\dfrac{p_1}{q_1},\dfrac{p_2}{q_2},\dfrac{p_3}{q_3}\right)$ in the $2$-Farey tree, the boundary of $X=X_{\frac{p_1}{q_1},\frac{p_2}{q_2},\frac{p_3}{q_3}}$ is diffeomorphic to $S^1\times S^2$, and adding a $3$-handle and a $4$-handle to $X$ yields $\CPb$.
\end{prop}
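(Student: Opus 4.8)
The plan is to prove this by structural induction on the $2$-Farey tree, using the recursive rule from Section~\ref{s:2Fareytree} to reduce each node to its parent via Kirby calculus, and handling the base of the induction by a direct computation.

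For the inductive step, suppose $\left(\frac{p_1}{q_1},\frac{p_2}{q_2},\frac{p_3}{q_3}\right)$ is obtained from a parent triple by the mediant rule, so that (after accounting for which branch we took) the new triple is either $\left(\frac{p_1}{q_1},\frac{p_1+p_3}{q_1+q_3},\frac{p_3}{q_3}\right)$ where $\frac{p_1}{q_1}$ is inherited and $\frac{p_3}{q_3}$ is the old mediant, or the left-right mirror of this; in either case two of the three fractions are inherited from the parent and one is the old mediant. I would draw the diagram $X_{\frac{p_1}{q_1},\frac{p_2}{q_2},\frac{p_3}{q_3}}$ with the three blue curves $K_{p_i,q_i}$ each sitting in an $\varepsilon$-neighbourhood of a torus at distance $r_i$ from the dotted circle, as in Lemma~\ref{lem:LPembed}. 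The key point, which is the heart of the argument, is that the relation $p_2=p_1+p_3$, $q_2=q_1+q_3$ together with $p_iq_{i+1}-p_{i+1}q_i=2$ allows the middle $2$-handle $K_{p_2,q_2}$ to be handleslid over the other two $2$-handles (in the neighbourhood of a $2$-torus in $\partial(S^1\times D^3)$, as advertised in the introduction) and then cancelled, or alternatively slid so as to reproduce the parent diagram together with a separable unknot that can be absorbed. Concretely: after the slides one should see the parent diagram $X_{\frac{p_1'}{q_1'},\frac{p_2'}{q_2'},\frac{p_3'}{q_3'}}$ (up to the diffeomorphisms $B_{p,q}\cong B_{p,kp\pm q}$ of Lemma~\ref{lem:addp}, which correspond to Rolfsen twists on the $\langle 0\rangle$-framed unknot replacing the dotted circle), possibly with an extra $0$-framed unknot split off, which contributes an $S^1\times S^2$ summand that the added $3$- and $4$-handle will cap. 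Since the parent satisfies the conclusion by the inductive hypothesis — its boundary is $S^1\times S^2$ and capping with a $3$-handle and $4$-handle gives $\CPb$ — and handleslides and the cancellation/creation of a $0$-framed $2$-handle with its complementary $3$-handle do not change the underlying closed $4$-manifold, the node also satisfies the conclusion.

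For the base case I would use the root triple $\left(\frac10,\frac32,\frac22\right)$, or rather — as the remark after Theorem~\ref{thm:2Farey} suggests — the two additional nodes at the bottom, e.g. $\left(\frac10,\frac12,\frac02\right)$ shown in Figure~\ref{fig:Base}. Here $B_{1,0}$ is just the $0$-handle, $B_{2,0}$ is a standard rational ball, and the diagram simplifies directly (following the indicated handleslide) to $S^1\times B^3$ with a single $-1$- or suitably-framed $2$-handle whose closure is $\CPb$; one verifies by Kirby calculus that the boundary is $S^1\times S^2$ and that the closed-up manifold has $b_2=1$ with negative-definite intersection form, hence is $\CPb$ (using that a simply-connected closed $4$-manifold with $b_2=1$ and the correct intersection form built this way is standard). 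One must also check the boundary claim at every node: the boundary of $X$ is the result of the same surgery description, and tracking it through the slides shows it remains $S^1\times S^2$, consistent with the fact that $\partial X$ must be $S^1\times S^2$ for the closed manifold to be obtained by adding a single $3$-handle and a single $4$-handle.

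The main obstacle I expect is the bookkeeping in the handleslide step: one must choose orientations and framings carefully so that sliding $K_{p_2,q_2}$ over $K_{p_1,q_1}$ and $K_{p_3,q_3}$ genuinely produces the parent curves with the correct continued-fraction data (or torus-knot/cable data) and the correct relative framings, and one must verify that after the slides the leftover curve is an honest split $0$-framed unknot rather than something linking the rest. The torus-knot and $(2,-1)$-cable descriptions of the $K_{p,q}$ from Section~\ref{s:balls}, and the observation there that blowing up acts as a Dehn twist on the tubular neighbourhood of the relevant torus, are exactly the tools that make this tractable; the identities $T_{-p,q}\simeq T_{-q,p}$ and the framing formula $pq=\sum n_ir_{i-1}^2$ from the proof of Lemma~\ref{lem:Bpq} will reappear. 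Modulo this careful local analysis near the $2$-torus, the induction is formal.
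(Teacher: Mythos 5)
Your overall strategy is the same as the paper's: a structural induction on the $2$-Farey tree enlarged by two extra nodes, with the base case $X_{\frac10,\frac12,\frac02}$ of \Cref{fig:Base} handled by direct Kirby calculus, and the inductive step carried out by handleslides in neighbourhoods of the nested tori. But the two key steps are not actually established in your write-up. In the inductive step you offer two possible outcomes of the slides --- that the middle $2$-handle ``can be cancelled'', or that one recovers the parent diagram ``together with a separable $0$-framed unknot'' --- and neither can be right: each node's diagram has exactly three $2$-handles, so either alternative would change the handle count. What actually happens (in either direction of the induction) is that a slide of an \emph{outer} curve over the \emph{middle} one converts one node's diagram exactly into the adjacent node's, with nothing cancelled and nothing split off. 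This rests on the one computation you defer as ``bookkeeping'' but which is the heart of the proof: since consecutive curves have algebraic intersection $-2$ on the torus, the slide of \Cref{fig:slides} changes the class of $K_3$ by $-2[K_2]$, giving
\[
[K_3']=[K_3]-2[K_2]=(p_3-2p_2,\,q_3-2q_2)=-(p_1+p_2,\,q_1+q_2),
\]
so that, using $K_{p,q}=K_{-p,-q}$ and the fact that $K_3'$ is again a simple closed curve or a $(2,-1)$-cable near a suitably nested torus, the new curve is precisely $K_{p_1+p_2,q_1+q_2}$ and the result is $X_{\frac{p_1}{q_1},\frac{p_1+p_2}{q_1+q_2},\frac{p_2}{q_2}}$ (symmetrically for the right child). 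No appeal to \Cref{lem:addp} or Rolfsen twists is needed. The hedging (``possibly'', ``or alternatively'') indicates this has not been verified, and without it the induction does not close.

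The second gap is in the base case. You propose to identify the closed-up manifold as $\CPb$ because it is simply connected with $b_2=1$ and negative definite intersection form; that argument only identifies it up to homotopy equivalence, whereas the proposition asserts the smooth standard $\CPb$ --- indeed the paper explicitly leaves open (\Cref{q:exoticCP2s}) whether certain homotopy $\CP$'s are standard, so this distinction is not cosmetic. The identification must instead be made by explicit reduction to the standard diagram: after sliding the purple handle over the red one as indicated in \Cref{fig:Base} and cancelling the red $2$-handle against the $1$-handle, one is left with a two-component unlink with framings $-1$ and $0$ (whose boundary is visibly $S^1\times S^2$); the added $3$-handle cancels the $0$-framed unknot and the $4$-handle caps off, leaving the $-1$-framed unknot diagram of $\CPb$.
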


\begin{proof} The proof is by structural induction on a slightly enlarged 2-Farey tree, with two added nodes as shown below.

{\centering
\includegraphics[width=6cm]{./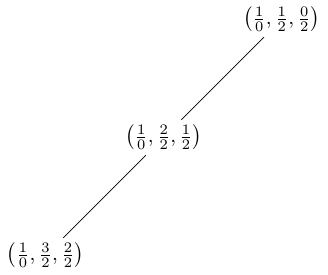}
\par}
\noindent
For the base case, we consider the diagram in \Cref{fig:Base}.  After sliding the purple 2-handle over the red one along the arrow in the figure, we may cancel the red 2-handle and the 1-handle leaving a two-component unlink with framings $-1$ and $0$.  This has boundary $S^1\times S^2$. We add a 3-handle to cancel the 0-framed 2-handle, and then a 4-handle, giving a diagram of $\CPb$.

For the inductive step, we show that if $\left(\dfrac{p_1}{q_1},\dfrac{p_2}{q_2},\dfrac{p_3}{q_3}\right)$ is a 2-Farey triple then
\begin{equation}
\label{eq:slides}
X_{\frac{p_1}{q_1},\frac{p_1+p_2}{q_1+q_2},\frac{p_2}{q_2}}\cong
X_{\frac{p_1}{q_1},\frac{p_2}{q_2},\frac{p_3}{q_3}}\cong
X_{\frac{p_2}{q_2},\frac{p_2+p_3}{q_2+q_3},\frac{p_3}{q_3}}.
\end{equation}
Each of these isomorphisms is established by handlesliding one of the ``outer" 2-handle attaching circles over the middle one.  

\begin{figure}[htbp]
\centering
\includegraphics[width=14cm]{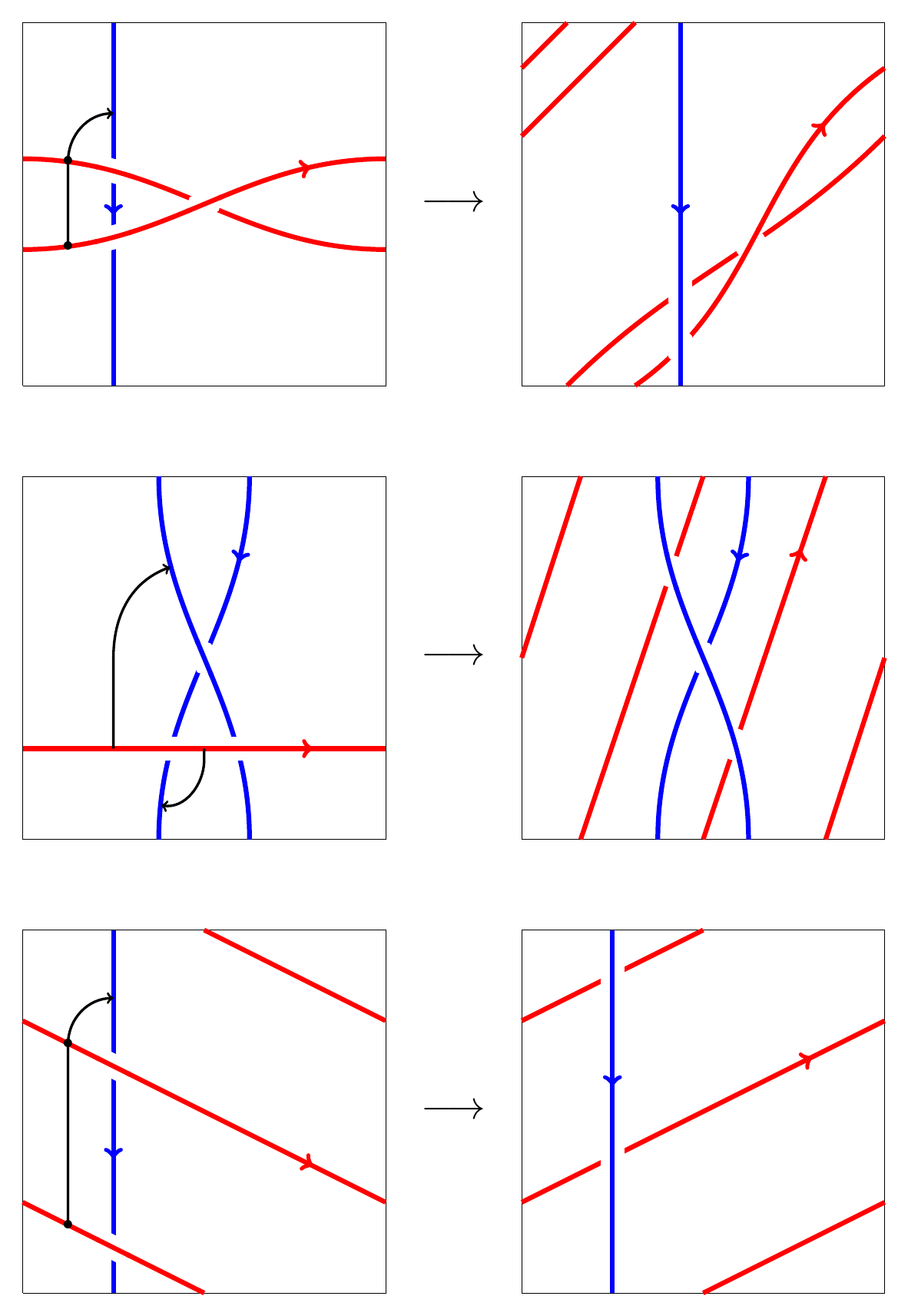}
\caption{{\bf Handlesliding to move down the 2-Farey tree.}  The blue curve is $K_2$.  To move leftward down the tree we take the red curve to be $K_3$, while for the right move, it is $K_1$.  In each case the orientation on the torus is such that $[K_i]\cdot[K_{i+1}]=-2$.} 
\label{fig:slides}
\end{figure}

For convenience, let $K_i\cong K_{p_i,q_i}$ denote the attaching circles of the 2-handles.  The three attaching circles are contained in neighbourhoods of nested tori.  For each successive pair $K_i, K_{i+1}$, the projections to the torus have algebraic intersection $-p_iq_{i+1}+p_{i+1}q_i=-2$.  Either they are both  simple closed curves on the torus, or one is a simple closed curve and the other is a $(2,-1)$ cable, relative to the torus framing, of a simple closed curve.  Thus up to an automorphism of the torus, the pair $K_2, K_3$ appear as one of the diagrams on the left of \Cref{fig:slides}.  Handlesliding as in that figure replaces $K_3$ (the red curve on the left) with $K_3'$ (the red curve on the right).  We see that $K_3'$ lies in an $\varepsilon$-neighbourhood of a torus inside that of $K_2$.  Also $K_3'$ is again either a simple closed curve or a $(2,-1)$ cable, and its homology class on the torus is
$$[K_3']=[K_3]-2[K_2]=(p_3-2p_2,q_3-2q_2)=(-p_1-p_2,-q_1-q_2),$$
noting that $p_2=p_1+p_3$, $q_2=q_1+q_3$, and thus $K_3'$ is the same as $K_{p_1+p_2,q_1+q_2}$.

This establishes the first diffeomorphism in \eqref{eq:slides}.  For the second, we slide $K_1$ over $K_2$.  We again follow \Cref{fig:slides}, but this time with $K_1$ in red.
\end{proof}

\begin{proof}[Proof of \Cref{thm:ANN}]
It follows from \Cref{thm:2Farey} that up to reordering, $\left(\dfrac{p_1}{q_1},\dfrac{p}{q},\dfrac{p_2}{q_2}\right)$ is a 2-Farey triple. The embeddings in $\CPb$ now follow from \Cref{lem:LPembed} and \Cref{prop:2Femb}.
\end{proof}


\begin{proof}[Proof of Corollaries \ref{cor:Bpq} and \ref{cor:Lpq}]
%
%
For this we note that by \Cref{thm:2Farey}, either $\dfrac{p}{q}$ or $\dfrac{p}{p-q}$ appears in a 2-Farey triple.  It follows that $B_{p,q}$ embeds smoothly in $\CPb$ and hence so does its boundary $L(p^2,pq-1)$.  Finally note that an embedding of a 3-manifold into a 4-manifold does not depend on the orientation of either.
\end{proof}


\section{A common framework for known results}\label{s:compare}

In this section we make an observation which attempts to put \Cref{thm:ANN} in a common framework with previous results of Hacking--Prokhorov \cite{hp}, Evans--Smith \cite{es}, and Lisca--Parma \cite{HD2}.  Each of these results gives a binary tree, together with an embedding of a triple of rational balls in $\CP$ for each node of the tree.  The first example of this is the famous Markov tree which contains all triples of solutions of 
\begin{equation*}
p_1^2+p_2^2+p_3^2=3p_1p_2p_3.
\end{equation*}
This features in each of \cite{es,hp,HD2}.  Two further trees of triples arise from \cite{HD2}, and the 2-Farey tree underlies \Cref{thm:ANN}.  The following definition may be interpreted in terms of the proofs of \Cref{prop:2Femb} and  \cite[Proposition 4.6]{HD1}.  In each case we have in mind three curves lying on or near nested tori, and the recursive rule is given by handlesliding one curve over another.

\begin{defn}
A \emph{signed slide triple tree} is an infinite rooted binary tree with a triple $\left(\left(\dfrac{p_1}{q_1},\delta_1\right),\left(\dfrac{p_2}{q_2},\delta_2\right),\left(\dfrac{p_3}{q_3},\delta_3\right)\right)$ at each node and the recursive rule  

{\centering
\includegraphics[width=\textwidth]{./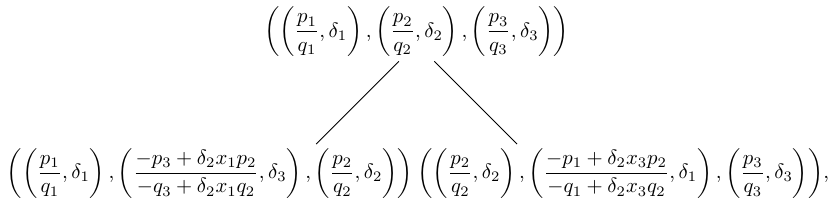}
\par}
\noindent
where $\delta_i\in\{\pm1\}$, $x_1=p_2q_3-p_3q_2$ and $x_3=p_1q_2-p_2q_1$.
\end{defn}

We note that the following gives new recursive descriptions of the triples appearing in \cite[Theorem 1.2(2),(3)]{HD2}.
\begin{thm}\label{thm:trees}
Let $\left(\left(\dfrac{p_1}{q_1},\delta_1\right),\left(\dfrac{p_2}{q_2},\delta_2\right),\left(\dfrac{p_3}{q_3},\delta_3\right)\right)$ be a triple appearing at a node of a signed slide triple tree whose root is labeled by one of
\begin{enumerate}
\item \triple{1}{-1}{1}{5}{1}{1}{2}{1}{1},
\item \triple{1}{1}{-1}{-3}{-1}{-1}{2}{1}{1},
\item \triple{1}{-1}{1}{-3}{-1}{-1}{2}{1}{1},
\item \triple{1}{0}{-1}{3}{2}{-1}{2}{2}{-1}.
\end{enumerate}
Then the disjoint union $\bigsqcup\delta_i B_{p_i,q_i}$ embeds smoothly in $\CP$.  The first root above gives the triples arising from the Markov tree, the second and third are the second and third families of embeddings in \cite[Theorem 1.2]{HD2}, and the fourth gives the triples in \Cref{thm:ANN}.
\end{thm}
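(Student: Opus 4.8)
The plan is to prove \Cref{thm:trees} by the same structural induction that underlies \Cref{prop:2Femb}, but now carried out for an abstract signed slide triple tree. The key point is that to each triple $\left(\left(\frac{p_1}{q_1},\delta_1\right),\left(\frac{p_2}{q_2},\delta_2\right),\left(\frac{p_3}{q_3},\delta_3\right)\right)$ at a node we should attach a 4-manifold $X$ built from a single $1$-handlebody $S^1\times B^3$ together with three $2$-handles, whose attaching circles $K_1,K_2,K_3$ are the curves $K_{p_i,q_i}$ (a simple closed curve, or a $(2,-1)$-cable of one, relative to the torus framing) sitting in $\varepsilon$-neighbourhoods of three nested tori at radii $r_1>r_2>r_3$, oriented so that $[K_i]\cdot[K_{i+1}]=-x$ with $x\in\{x_1,x_3\}$ the relevant determinant from the recursive rule. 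As in \Cref{lem:LPembed}, the nested-ball argument shows that $\bigsqcup_i \delta_i B_{p_i,q_i}$ embeds in $X$ (the sign $\delta_i$ records whether we use the ball with its given orientation or the reverse, which just amounts to mirroring the corresponding local picture). So the theorem reduces to the claim: for every node of each of the four trees, $X$ has boundary $S^1\times S^2$, and capping off with a $3$-handle and a $4$-handle gives $\CP$ (equivalently $\CPb$, since we may reverse orientation).

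First I would verify this claim at the four roots by a direct Kirby calculus computation, exactly in the spirit of the base case in \Cref{prop:2Femb}: slide one outer $2$-handle over the middle one, cancel the middle $2$-handle against the $1$-handle, and check that what remains is an unlink with framings that become a $\pm1$-framed unknot plus a $0$-framed unknot (plus a $3$- and $4$-handle), hence $\CP$. For root (4) this is precisely \Cref{fig:Base}; roots (1), (2), (3) are small explicit diagrams and the reader can be referred to \cite{HD2} or \cite{EMPR} for the Markov base case. Next I would establish the inductive step in the abstract setting: if a triple satisfies the signed slide recursive rule, then handlesliding the left (resp.\ right) outer attaching circle over the middle one produces exactly the $X$ associated to the left (resp.\ right) child. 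This is the content of \Cref{fig:slides}: the handleslide replaces $[K_3]$ by $[K_3]-2[K_2]$ (or, in general, by $[K_i]$ minus the appropriate multiple of $[K_2]$ dictated by $x_1$), the new curve again lies on a torus nested inside that of $K_2$, and it is again a simple closed curve or a $(2,-1)$-cable, so $X$ is unchanged up to diffeomorphism and the triple transforms according to the stated rule. Hence all four trees consist entirely of nodes whose $X$ caps off to $\CP$, and the embedding statement follows from \Cref{lem:LPembed}.

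The main obstacle is bookkeeping rather than conceptual: one must check that the recursive rule in the definition of a signed slide triple tree, with $x_1=p_2q_3-p_3q_2$ and $x_3=p_1q_2-p_2q_1$ and the sign changes on the $\delta_i$, matches exactly the effect of the handleslide on the data $(p_i,q_i,\delta_i)$ — in particular that the homology class computation $[K_3']=[K_3]-x_1[K_2]$ gives the advertised fraction, that the orientation conventions making $[K_i]\cdot[K_{i+1}]$ the stated determinant are consistent along the whole tree, and that the cabling/simple-closed-curve alternative is preserved (for which, as in the proof of \Cref{lem:Bpq}, it is convenient to observe that a blow-up, and hence the relevant handleslide, acts on the tubular neighbourhood of the torus as a power of a Dehn twist times the identity). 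One should also note that for the Markov root (1) the determinants $x_1,x_3$ are not $2$ but vary, so the local pictures in the analogue of \Cref{fig:slides} involve a general $x$; the handleslide argument goes through verbatim with $2$ replaced by $|x|$, since what matters is only that the slid curve stays on a nested torus and stays in the allowed family of curves. Finally, I would remark that the identification of trees (1), (2), (3) with the Markov tree and with families (2) and (3) of \cite[Theorem 1.2]{HD2} is a matter of comparing the recursions at the roots, and requires no new argument.
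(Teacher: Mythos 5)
Your proposal takes a genuinely different route from the paper, and the route has a real gap. The paper's proof of \Cref{thm:trees} does not re-derive any embeddings by Kirby calculus: it is an \emph{identification} argument. For root (1) it invokes Perling to show $x_1=3p_1$, $x_3=3p_3$, so the recursion reproduces the Markov tree with its T-weights; for root (2) it matches the recursion with the mutation rule for the Markov-type equation $\delta_1p_1^2+\delta_2p_2^2+\delta_3p_3^2=p_1p_2p_3$ and its weights; for root (3) it verifies by structural induction that the quantities $(x_1,x_2,x_3)$ and $(p_i,q_i)$ satisfy equations (3-2) and (3-3) of \cite{HD2}, so the tree reproduces exactly the third Lisca--Parma family; for root (4) it shows $x_1=x_3=2$ throughout, recovering the 2-Farey tree. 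The embeddings themselves are then imported from \cite{hp,es,HD2} and from \Cref{thm:ANN}. Your plan instead tries to prove all four families by one uniform handleslide induction on the manifolds $X_{\frac{p_1}{q_1},\frac{p_2}{q_2},\frac{p_3}{q_3}}$.

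The gap is in your inductive step. The argument of \Cref{prop:2Femb} and \Cref{fig:slides} depends essentially on the consecutive intersection numbers being $-2$: the two curves (one possibly a $(2,-1)$-cable) meet the torus picture in a very specific way, and one slide produces a curve that is again a simple closed curve or a $(2,-1)$-cable, on a nested torus, with framing $-1$ relative to the torus framing. For roots (1)--(3) the numbers $x_1,x_3$ are not $2$ --- for the Markov tree they equal $3p_1$ and $3p_3$ and are unbounded --- and your claim that the picture ``goes through verbatim with $2$ replaced by $|x|$'' is precisely where the difficulty lies: a single handleslide changes $[K_3]$ by $[K_2]$, not by $x_1[K_2]$, so one must slide $|x_1|$ times, and there is no reason the resulting curve remains in the allowed family of curves on a nested torus, nor that its framing relative to the torus framing is again $-1$. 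Moreover, when $\delta_i=-1$ the ball $-B_{p_i,q_i}$ is not obtained by attaching the 2-handle of \Cref{fig:Bpq} to the nested piece $S^1\times B^3_i$, so \Cref{lem:LPembed} does not apply as stated and your treatment of the signs as ``mirroring the local picture'' needs justification. Without these points your induction does not close; the safe proof is the paper's, which reduces each tree to an already-established family of embeddings.
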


\begin{proof}[Proof sketch]
For the case of the first root, which yields the Markov tree, we appeal to \cite{perling}, which refers to $\{q_1,q_2,q_3\}$ as the \emph{T-weights} of the Markov triple $\{p_1,p_2,p_3\}$.  The identities \cite[(1)]{perling} show that $x_1=3p_1$ and $x_3=3p_3$, and then the recursive rule above agrees with the mutation rules for Markov triples and their T-weights \cite[\S2 and Lemma 3.1]{perling}.  The first three rows of the tree are shown below.  We omit the $\delta_i$ which are all equal to $+1$.

{\centering
\includegraphics[width=10cm]{./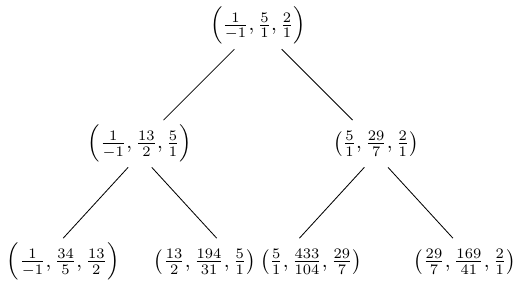}
\par}
\noindent

The second root above yields the triples of rational balls embedding in $\CP$ given by Lisca--Parma in \cite[Theorem 1.2(2)]{HD2}.  
One can show that $x_1=\delta_1p_1$ and $x_3=\delta_3p_3$ and that, as for the standard Markov equation, the recursive rule given above agrees with the mutation rule for the triples of solutions and corresponding weights of the Markov-type equation
\begin{equation*}
\delta_1 p_1^2+\delta_2 p_2^2 +\delta_3 p_3^2=p_1 p_2 p_3,
\end{equation*}
with $\{\delta_1, \delta_2, \delta_3\}=\{1,-1,-1\}$ and weights
\begin{align*}
q_1&\equiv\delta_2p_2/p_3\equiv-\delta_3p_3/p_2\pmod{p_1},\\
q_2&\equiv\delta_3p_3/p_1\equiv-\delta_1p_1/p_3\pmod{p_2},\\
q_3&\equiv\delta_1p_1/p_2\equiv-\delta_2p_2/p_1\pmod{p_3}.
\end{align*}
Each weight $q_i$ is strictly between $0$ and $p_i$ unless $p_i=1$, in which case we take $q_i=1$.

The first entries of the resulting tree of triples are shown below.  In each triple the underlined entry has $\delta_i=1$.  Note that $B_{-p,-q}=B_{p,q}$, but the negative signs in the tree below are needed for the recursion.

{\centering
\includegraphics[width=10cm]{./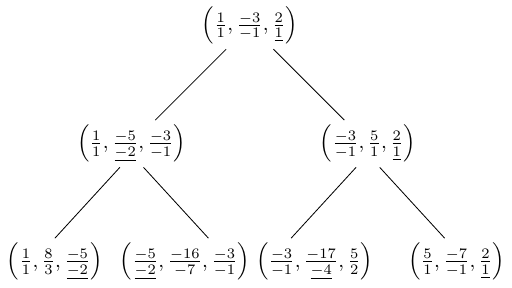}
\par}
\noindent

The first few entries of the tree arising from the third root in the statement are shown below, as well as two extra nodes before the root.  In each triple the underlined entry has $\delta_i=-1$.

{\centering
\includegraphics[width=10cm]{./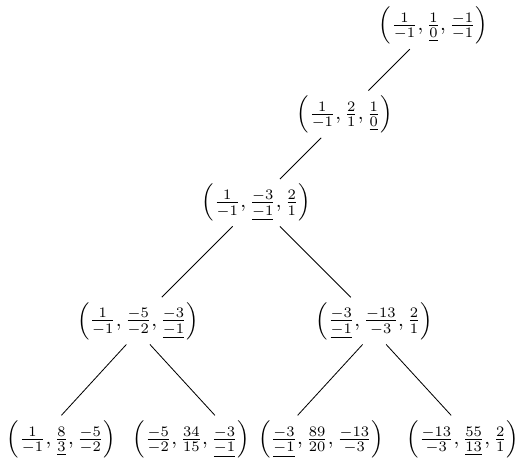}
\par}
\noindent
We claim that this contains precisely the triples of rational balls embedding in $\CP$ given by Lisca--Parma in \cite[Theorem 1.2(3)]{HD2}.  To see this, for each node, let $\gamma_i=(p_i,q_i)$,  and take
$$(x_1,x_2,x_3)=(\gamma_2\cdot\gamma_3,\gamma_1\cdot\gamma_3,\gamma_1\cdot\gamma_2),$$
where $(p,q)\cdot(r,s)=ps-qr$.  For example, if we consider the first triple 
$\left(\frac{1}{-1},\frac{1}{\underline{0}},\frac{-1}{-1}\right)$, we get
$$\gamma_1=(1,-1), \gamma_2=(1,0), \gamma_3=(-1,-1) \text{ and } (x_1,x_2,x_3)=(-1,-2,1),$$
and for the second triple $\left(\frac{1}{-1},\frac{2}{1},\frac{1}{\underline{0}}\right)$ we obtain
$$\gamma_1=(1,-1), \gamma_2=(2,1), \gamma_3=(1,0) \text{ and } (x_1,x_2,x_3)=(-1,-1,-3).$$

We claim that every triple in this tree satisfies the following Markov-type equation
\begin{equation}
\delta_1 x_1^2+\delta_2 x_2^2 +\delta_3 x_3^2=x_1 x_2 x_3-4,\label{eq:3-2}
\end{equation}
and also
\begin{equation}
\delta_1 p_1^2+\delta_2 p_2^2 +\delta_3 p_3^2-\delta_1\delta_2p_1 p_2 x_3-\delta_1\delta_3p_1p_3x_2-\delta_2\delta_3p_2p_3x_1-p_1p_3x_1x_3=0.\label{eq:3-3}
\end{equation}
These are equations (3-2) and (3-3) in \cite{HD2}.

It is straightforward to check these equations are satisfied for the triple $\left(\frac{1}{-1},\frac{1}{\underline{0}},\frac{-1}{-1}\right)$, which forms the base case for a structural induction.  We then check that triples given by the recursive rule continue to satisfy \eqref{eq:3-2} and \eqref{eq:3-3}.

Suppose that we have $\delta_2=-1$ and that
$$\gamma_1=(p_1,q_1), \gamma_2=(p_2,q_2), \gamma_3=(p_3,q_3),$$
and thus
$$(x_1,x_2,x_3)=(p_2q_3-p_3q_2,p_1q_3-p_3q_1,p_1q_2-p_2q_1).$$
By induction, we assume that \eqref{eq:3-2} and \eqref{eq:3-3} hold for these variables.
Moving left down the tree, the recursive rule leads to $\delta_3=-1$,
$$\widehat{\gamma}_1=(p_1,q_1), \widehat{\gamma}_2=(-p_3-x_1p_2,-q_3-x_1q_2), \widehat{\gamma}_3=(p_2,q_2),$$
and 
\begin{equation}
(\widehat{x}_1,\widehat{x}_2,\widehat{x}_3)=(x_1,x_3,-x_2-x_1x_3).\label{eq:mut}
\end{equation}
We verify that \eqref{eq:3-2} holds:
\begin{align*}
&\widehat{x}_1^2+\widehat{x}_2^2-\widehat{x}_3^2-\widehat{x}_1\widehat{x}_2\widehat{x}_3+4\\
&=x_1^2+x_3^2-x_2^2-2x_1x_2x_3-x_1^2x_3^2+x_1x_2x_3+x_1^2x_3^2+4\\
&=x_1^2-x_2^2+x_3^2-x_1x_2x_3+4\\
&=0,
\end{align*}
and also \eqref{eq:3-3}:
\begin{align*}
&\widehat{p}_1^2+\widehat{p}_2^2-\widehat{p}_3^2-\widehat{p}_1 \widehat{p}_2 \widehat{x}_3+\widehat{p}_1\widehat{p}_3\widehat{x}_2+\widehat{p}_2\widehat{p}_3\widehat{x}_1-\widehat{p}_1\widehat{p}_3\widehat{x}_1\widehat{x}_3\\
&=p_1^2+p_3^2+2p_2p_3x_1+p_2^2x_1^2-p_2^2-p_1p_3x_2-p_1p_2x_1x_2-p_1p_3x_1x_3-p_1p_2x_1^2x_3\\
&\quad+p_1p_2x_3-p_2p_3x_1-p_2^2x_1^2+p_1p_2x_1x_2+p_1p_2x_1^2x_3\\
&=p_1^2-p_2^2+p_3^2+p_1p_2x_3-p_1p_3x_2+p_2p_3x_1-p_1p_3x_1x_3\\
&=0,
\end{align*}
in both cases using induction for the final equality.

The other cases (moving right down the tree from $\delta_2=-1$, or starting with $\delta_2=1$) are minor variants of this.

The proof of \cite[Theorem 1.2(3)]{HD2} shows that the triples therein come from solutions to \eqref{eq:3-2} and \eqref{eq:3-3} such that the triples $(x_1,x_2,x_3)$ are obtained from the minimal solution $(1,2,1)$ by mutations, sign changes, and permutations as in \eqref{eq:mut}.  By comparison, we conclude that our  signed slide triple tree contains all of the triples of rational balls in the family given in \cite[Theorem 1.2(3)]{HD2}.

For the last case, one can show by induction that $x_1=x_3=2$ for all nodes, and then the recursion rule is easily seen to agree with that of the 2-Farey tree.
\end{proof}


\section{Further constructions}\label{s:homotopyCP2s}

In this section we describe some methods for finding rational balls bounded by lens spaces which embed in a homotopy $\CP$ or $\CPb$.
These methods are suitable for use in a computer search and led us to the examples in \Cref{thm:ANN}.
They also give rise to other examples which have not yet appeared in the literature.

Each of these homotopy $\CP$'s admits a handle decomposition with at most one 1-handle and at most two 3-handles, which we do not work out explicitly.
As mentioned in the introduction above, we expect that all these homotopy $\CP$'s are in fact standard.

\begin{quest}\label{q:exoticCP2s}
Are all homotopy $\CP$'s constructed in this section diffeomorphic to the standard $\CP$?
\end{quest}

The third construction below, in particular, gives handle decompositions of \emph{geometrically simply-connected} homotopy $\CP$s starting from certain Berge knots.
Furthermore, these handle decompositions have only one 3-handle.

\begin{conj}
The homotopy $\CP$'s constructed in Proposition~\ref{p:berge} below, starting from a Berge knot, are diffeomorphic to $\CP$.
\end{conj}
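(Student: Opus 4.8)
We outline a possible approach. Write the homotopy $\CP$ of Proposition~\ref{p:berge} as $Z=X_n(K)\cup_L W$, where $X_n(K)$ is the trace of the surgery realising the lens space $L=S^3_n(K)$ on the Berge knot $K$, and $W$ is the rational ball (or disjoint union of rational balls) glued to $X_n(K)$ along $L$; here $n$ is necessarily a perfect square, since $L$ bounds a rational ball. A Mayer--Vietoris and van Kampen computation shows that $Z$ is a closed, simply connected $4$-manifold with $b_2=1$, so by Freedman's theorem it is homeomorphic to $\CP$. The content of the conjecture is to promote this to a diffeomorphism, and the plan is to do so by Kirby calculus: produce a sequence of handle moves reducing the decomposition of Proposition~\ref{p:berge} to the standard one, namely a single $(+1)$-framed unknot together with a $3$- and a $4$-handle.

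Concretely, I would turn that decomposition upside down. Since it has one $0$-handle, two $2$-handles, one $3$-handle and one $4$-handle, the dual decomposition has one $1$-handle (dual to the $3$-handle), two $2$-handles and a $4$-handle, and it suffices to cancel the $1$-handle against one of the $2$-handles and then recognise the surviving $2$-handle as a $\pm1$-framed unknot; the $4$-handle attachment is then forced by Laudenbach--Po\'enaru. The difficulty is that no cancellation is available at the outset: the $2$-handle coming from the surgery trace $X_n(K)$ is unlinked from the belt sphere of the $1$-handle, whereas the $2$-handle coming from the $2$-handle of the rational ball $B_{p,q}$ runs over it $p$ times homologically, so a sequence of slides between the two $2$-handles must be performed first. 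Organising these slides is where the structure of a Berge knot should enter: $K$ is doubly primitive, lying on a genus-$2$ Heegaard surface $\Sigma$ of $S^3$ so as to generate $\pi_1$ of each handlebody, and after surgery $\Sigma$ descends to a genus-$2$ Heegaard splitting of $L$ of a controlled form into which the belt circle of the $2$-handle of $B_{p,q}$ can be isotoped. This should produce, after the slides, a $2$-handle meeting the belt sphere of the $1$-handle once; the leftover $2$-handle can then be unknotted using the Rolfsen-twist and slam-dunk manipulations already exploited in the proofs of \Cref{lem:Bpq} and \Cref{lem:addp}.

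A less hands-on variant would be to represent the generator of $H_2(Z)\cong\Z$ by a smoothly embedded sphere $S$ --- already nontrivial, since the obvious sphere coming from the surgery trace represents $p$ times the generator --- chosen compatibly with the handle decomposition, so that $Z\setminus\nu(S)$ inherits a handle decomposition with no $1$-handles and exactly one $3$-handle, hence of the form $B^4\cup(\text{one }2\text{-handle})\cup(\text{one }3\text{-handle})$. A homotopy $4$-ball of this shape with $S^3$ boundary is standard: the attaching circle of the $2$-handle must be a $0$-framed unknot (otherwise the bounded $3$-manifold contains no essential sphere along which the $3$-handle could be attached to yield $S^3$, using Gabai's Property~R), and then the $2$- and $3$-handles cancel by Laudenbach--Po\'enaru, so $Z=B^4\cup\nu(S)\cong\CP$.

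The main obstacle, and the reason the statement is recorded only as a conjecture, is carrying out either route uniformly over the Berge families; note that it is not known whether every geometrically simply connected homotopy $\CP$ is diffeomorphic to $\CP$, so some input specific to Berge knots is genuinely needed. When $K$ is a torus knot or a cable of a torus knot the resulting $Z$ is one of the classically known examples, and its standardness follows from known results; it is the sporadic families --- the primitive/Seifert-fibred knots and the knots lying on the fibre surface of a trefoil or figure-eight knot --- for which the slides in the first route, or equivalently the construction of a compatible generator sphere in the second, appear to require a case-by-case analysis, with no single argument in sight. The conjecture is supported by the fact that it holds on every example produced by the computer search of \Cref{s:data}.
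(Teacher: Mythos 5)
There is a genuine gap here --- indeed, by your own admission, the decisive step of both routes is missing, so what you have written is a research plan rather than a proof. Note first that the paper itself offers no proof of this statement: it is recorded as a conjecture, accompanied only by the remark that, since Berge knots fall into finitely many explicit families, one should be able to verify it family by family. Your proposal is consistent with that assessment, but it does not close the argument. In the first route, the entire content of the problem is the sentence ``Organising these slides is where the structure of a Berge knot should enter'': you do not exhibit the sequence of handle slides, nor a mechanism by which the doubly primitive condition produces a geometric (rather than merely algebraic) intersection number one between a $2$-handle and the belt sphere of the dual $1$-handle. In the second route, the reduction of a geometrically simply connected homotopy $4$-ball with one $2$-handle and one $3$-handle to the standard $B^4$ via Property~R and Laudenbach--Po\'enaru is indeed standard, but the input to that reduction --- a smoothly embedded sphere generating $H_2(Z)$, positioned compatibly with the handle decomposition --- is precisely what you flag as ``already nontrivial'' and do not construct. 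Either of these would constitute the proof; neither is supplied.

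Two smaller points. The assertion that standardness is known when $K$ is a torus knot or a cable of a torus knot is given without justification or reference; the paper's own Kirby-calculus arguments (\Cref{prop:2Femb} and the related constructions) cover specific families of triples, not all homotopy $\CP$'s arising from Proposition~\ref{p:berge} applied to torus knots, so this should not be taken for granted. Also, your handle count (one $0$-handle, two $2$-handles, one $3$-handle, one $4$-handle) presumes the rational ball has a single $1$-handle and a single $2$-handle; in case (ii) of Proposition~\ref{p:berge}, and for general Lisca balls, there may be more, which affects the bookkeeping in your dual decomposition even if not the overall strategy.
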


Since Berge knots are explicitly classified into several families, the conjecture should be provable by analysing each of the families individually.

Throughout the section, we will make use of the following fact, without explicit mention: every lens space which bounds a rational homology ball, also bounds a rational ball constructed with only $0$-, $1$-, and $2$-handles \cite{BBL,Lisca-ribbon}.

\subsection{Cobordisms between lens spaces}

Here we describe two (very similar) methods for constructing a homotopy $\CP$ containing three rational homology balls whose boundaries are lens spaces.

\begin{prop}\label{prop:Addc}
Suppose that we have two rational numbers, $p/q = [a_1,\dots,a_m]$ and $r/s = [b_1,\dots,b_n]$, and an integer $c$ such that:
\begin{itemize}
\item $L(p,q)$ and $L(r,s)$ bound rational homology balls,
\item letting $t/u = [a_m,\dots,a_1,c,b_1,\dots,b_n]$, $L(t,u)$ also bounds a rational homology ball, and
\item $p$, $r$, and $t$ are pairwise coprime.
\end{itemize}
Then there exists a homotopy $\CP$ or $\CPb$, $X$, and an embedding $B \sqcup B' \sqcup (-B'') \hookrightarrow X$, where $B$, $B'$, and $B''$ are three rational homology balls bounding $L(p,q)$, $L(r,s)$, and $L(t,u)$, respectively, constructed  using handles of index at most $2$.

Moreover, $X$ is positive definite, and thus a homotopy $\CP$, if and only if $c-q/p-s/r > 0$.
\end{prop}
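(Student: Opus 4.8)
The plan is to build $X$ directly as a handlebody and then identify the three rational balls inside it. The key geometric input is the standard linear plumbing description of lens spaces: $L(p,q)$ with $p/q=[a_1,\dots,a_m]$ bounds the linear plumbing $P(a_1,\dots,a_m)$ of disk bundles over spheres with Euler numbers $-a_i$, and reversing the chain computes $-L(p,q)$ via $P(a_m,\dots,a_1)$ (equivalently $p/(p-q)$ via its Hirzebruch--Jung expansion, but the mirror-chain description is what we want here). First I would consider the closed $4$-manifold obtained by gluing together, along a central $(-c)$-framed $2$-handle, the two plumbings $P(a_m,\dots,a_1)$ and $P(b_1,\dots,b_n)$: concretely, take a single $2$-handle attached along an unknot with framing $-c$, and plumb onto one end a chain realising $a_m,\dots,a_1$ and onto the other end a chain realising $b_1,\dots,b_n$. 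This is a negative-definite-looking linear plumbing $P(a_1,\dots,a_m,c,b_1,\dots,b_n)$ whose boundary, read from the continued fraction $t/u=[a_m,\dots,a_1,c,b_1,\dots,b_n]$, is $\pm L(t,u)$; since $t/u$ as written may not be a genuine Hirzebruch--Jung expansion (the $a_i,b_j\ge 2$ but $c$ is arbitrary), I would first blow up/down to normalise, which is exactly where the sign of $X$ will be decided. The point of the middle $2$-handle is that the three sub-plumbings $P(a_1,\dots,a_m)$, $P(b_1,\dots,b_n)$, and the plumbing read off from the reversed total chain $[b_n,\dots,b_1,c,a_1,\dots,a_m]$ (i.e.\ $t/u$ again up to the palindrome symmetry) each sit as codimension-$0$ submanifolds, bounding $L(p,q)$, $L(r,s)$, and $-L(t,u)$ respectively, with disjoint interiors once we split off tubular neighbourhoods of the corresponding sub-chains of spheres.

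Next I would replace each of these three lens-space boundary pieces of the ambient plumbing by a rational homology ball: by Lisca--Parma's classification (and the fact recalled just before this subsection, \cite{BBL,Lisca-ribbon}), $L(p,q)$, $L(r,s)$, $L(t,u)$ each bound rational balls $B,B',B''$ built from $0$-, $1$-, $2$-handles only. Carving out the plumbing neighbourhoods of the three linear sub-chains from the ambient closed $4$-manifold leaves a piece $W$ with three lens-space boundary components and the right rational homology (this is where the pairwise coprimality of $p,r,t$ is used: it forces $H_*(W;\mathbb{Q})$ to be that of a ball-with-three-holes and makes the Mayer--Vietoris gluing produce $H_2\cong\mathbb{Z}$ with no torsion), and then $X:=W\cup B\cup B'\cup (-B'')$ is a closed, simply-connected $4$-manifold with $H_1=0$, $H_2\cong\mathbb{Z}$, hence a homotopy $\CP$ or $\CPb$. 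The embedding $B\sqcup B'\sqcup(-B'')\hookrightarrow X$ with handles of index $\le 2$ is immediate from this description, and simple-connectivity follows from van Kampen since each $L(p,q)$ etc.\ surjects onto $\pi_1$ of its rational ball and the lens-space meridians die.

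Finally, for the definiteness criterion I would compute the intersection form of the ambient plumbing $P(a_1,\dots,a_m,c,b_1,\dots,b_n)$, or rather of $W$, directly. The form is presented by the tridiagonal matrix with the sequence $-a_1,\dots,-a_m,-c,-b_1,\dots,-b_n$ on the diagonal and $1$'s off-diagonal; its signature, after the normalising blow-ups, is $\pm 1$, and a standard continued-fraction computation shows the determinant of the full chain (up to sign) equals $t\cdot(c - q/p - s/r)$ or a positive multiple thereof — the key identity being that bordering the two HJ chains for $p/q$ and $r/s$ by $c$ produces a rational number whose numerator's sign is governed by $c$ minus the "end values" $q/p$ and $s/r$ of the two continued-fraction tails. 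Hence $X$ is positive definite (a homotopy $\CP$) precisely when $c - q/p - s/r > 0$, and negative definite (a homotopy $\CPb$) otherwise.

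I expect the main obstacle to be the bookkeeping in the second paragraph: making precise that the three plumbing sub-chains can be isotoped to have genuinely disjoint tubular neighbourhoods inside the ambient plumbing (they share the central $(-c)$-sphere and adjacent spheres, so one must slide handles to separate them), and then checking via Mayer--Vietoris that the resulting $W$ has exactly the homology needed for $X$ to be a homotopy projective plane rather than something with extra $H_2$. The coprimality hypothesis is what makes this work, but verifying the torsion-freeness of $H_2(X)$ cleanly requires care with the gluing maps on first homology of the three lens spaces. The definiteness computation, while a genuine calculation, is routine continued-fraction manipulation once the intersection form is written down.
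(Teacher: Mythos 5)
Your overall strategy --- realise the ambient manifold as the linear plumbing on the chain joining the $a$-string and the $b$-string through a central $c$-weighted vertex, carve out the two sub-plumbings on the legs, replace them by rational balls, and cap the outer boundary with $-B''$ --- is essentially the construction in the paper: Figure~\ref{fig:Addc} is exactly this plumbing with the two legs bracketed, so that the honest cobordism $W$ has a single $2$-handle and $b_2(W)=1$ for free. However, your write-up rests on a genuine misconception that breaks the orientation bookkeeping. Reversing a linear chain does \emph{not} reverse the orientation of the boundary lens space: $P(a_m,\dots,a_1)$ and $P(a_1,\dots,a_m)$ are the same weighted path, hence the same $4$-manifold, and the reversal only replaces $q$ by $q'$ with $qq'\equiv 1\pmod p$, which gives an orientation-\emph{preserving} diffeomorphism $L(p,q)\cong L(p,q')$. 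The operation that computes $-L(p,q)$ is passing to the Riemenschneider dual string for $p/(p-q)$, which is a different chain. Consequently your ``third sub-plumbing'' (the reversed total chain) is just the ambient plumbing again, with the same boundary $L(t,u)$ rather than $-L(t,u)$; it is not a codimension-zero piece with interior disjoint from the other two, and the ``main obstacle'' you flag (separating three sub-chains that share the central sphere) is not the real issue: the $a$- and $b$-legs are already disjoint, and $-B''$ must simply be glued along the \emph{outer} boundary of the whole plumbing. Relatedly, the notation $P(a_1,\dots,a_m,c,b_1,\dots,b_n)$ is not compatible with the fraction $[a_m,\dots,a_1,c,b_1,\dots,b_n]$: reversing only the $a$-segment of a chain changes the numerator of the continued fraction in general, so you must take the path $a_m-\cdots-a_1-c-b_1-\cdots-b_n$, with $a_1$ adjacent to $c$, for the boundary to be $L(t,u)$.

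The second gap is simple connectivity. Knowing that coprimality of $p$, $r$, $t$ gives the complement $W$ the rational homology of a three-holed ball does not make $X$ simply connected, and ``the lens-space meridians die'' is not an argument. What is needed (and what the paper proves) is that $\pi_1(W)$ is a quotient of the cyclic group $\pi_1(L(t,u))$ (turn the cobordism upside down), while $H_1(W)$ is also a quotient of $H_1(L(p,q))\oplus H_1(L(r,s))$, so coprimality of $t$ with $pr$ forces $H_1(W)=0$ and hence $\pi_1(W)=1$; alternatively, $\pi_1(W)\cong(\Zn{p}*\Zn{r})/\langle\langle gh\rangle\rangle$ is trivial because $\gcd(p,r)=1$. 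One then uses that the balls, having no handles of index above $2$, are glued upside down along their boundaries to conclude $\pi_1(X)=1$. Finally, your determinant computation for definiteness is workable and amounts to the paper's observation that $c-q/p-s/r$ is the Euler number of the Seifert fibration with central vertex the $c$-sphere; but note that with your chain ordering the leg read outward from the centre is $[a_m,\dots,a_1]=p/q'$, so you would land on the condition $c-q'/p-s/r>0$, which can differ in sign from $c-q/p-s/r$ when $c=1$ (a case the proposition explicitly allows). The ordering slip above is therefore not harmless.
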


Note that in the statement we do 
not impose that $c\ge 2$.

\begin{example}
Using Lisca's classification \cite{Lisca-ribbon} (see also \cite[Appendix A]{AGLL}), each of $L(16,7)$, $L(25,14)$, and $L(1681,737)$ bound rational balls.  In fact using \Cref{lem:Bpq} we have 
\begin{align*}
L(16,7)&=\partial B_{4,2},\\ 
L(25,9)&=\partial B_{5,2}, \quad\mbox{and}\\ 
L(1681,737)&=\partial B_{41,18}.
\end{align*}
We also find that 
\begin{align*}
16/7&=[3,2,2,3],\\
25/9&=[3,5,2], \quad\mbox{and}\\ 
1681/737&=[3,2,2,3,5,3,5,2].
\end{align*}
We conclude from \Cref{prop:Addc} that
$$B_{4,2}\sqcup B_{5,2}\sqcup -B_{41,18}\hookrightarrow X,$$
where $X$ is a homotopy $\CP$.  
\end{example}

\begin{proof}
Call $L = L(p,q)$, $L' = L(r,s)$, and $L'' = L(t,u)$.

In fact, we will prove something stronger, namely that there exists a simply-connected cobordism $W$ from $L\#L'$ to $L''$ with $b_2(W) = 1$.
We will see at the end of the proof below how this implies the statement.

This cobordism is shown in \Cref{fig:Addc}, and it is obtained by attaching a single 2-handle to $[0,1] \times (L\#L')$ along the connected sum $K\# K' \subset L\#L'$, where $K$ and $K'$ are cores of solid tori of a genus-1 Heegaard decomposition of $L$ and $L'$, respectively.

\begin{figure}[htbp]
\centering
\includegraphics[width=14cm]{./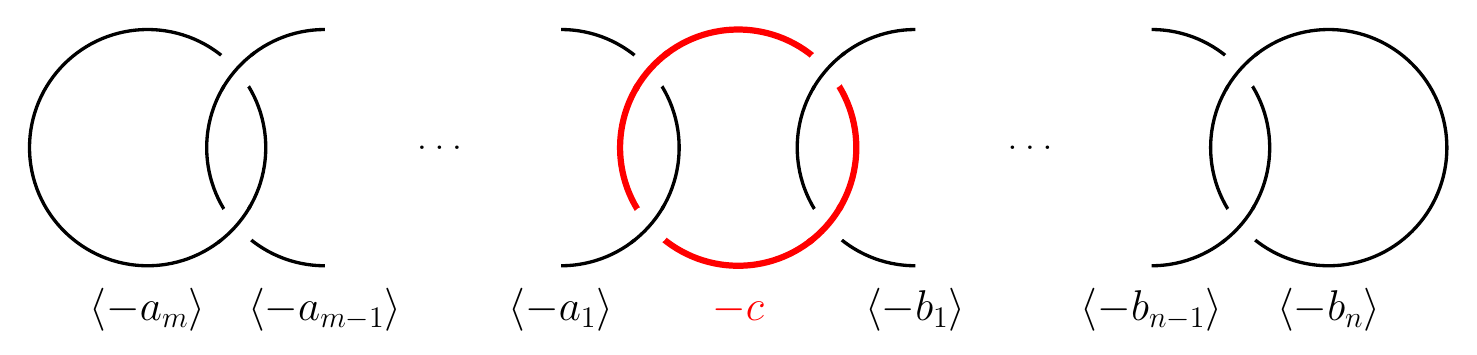}
\caption{{\bf A cobordism from $L\# L'$ to $L''$.} } 
\label{fig:Addc}
\end{figure}

To see that $W$ is simply-connected, we will show that $\pi_1(W)$ is cyclic and $H_1(W) = 0$.

To prove the former, turn $W$ upside-down, and view it as a 2-handle attachment on $[0,1]\times L''$.
Now, $\pi_1(W)$ is a quotient of $\pi_1(L'')$ by Seifert--van Kampen, and the latter is cyclic since $L''$ is a lens space.

To prove that $H_1(W) = 0$, we prove that it is a quotient of $H_1(L) \oplus H_1(L')$. 
We already know that $H_1(W)$ is a quotient of $H_1(L'')$, and since $H_1(L'')$ and $H_1(L) \oplus H_1(L')$ have coprime orders by the third assumption in the statement, then $H_1(W) = 0$.

Since $W$ is obtained by attaching a 2-handle to $[0,1]\times (L\#L')$, $H_*(W,L\#L')$ is supported in degree $2$ (and it is isomorphic to $\Z$, but we will not need this).
By the long exact sequence of the pair, the inclusion induces a surjection $H_1(L\#L') \to H_1(W)$.

The claim that $b_2(W) = 1$ also quickly follows from the long exact sequence of the pair.


Now view $W$ as a cobordism from the empty set to $-(L\# L') \sqcup L''$.
First attach a 3-handle along the connected-sum sphere in $-(L\# L')$, and then attach $B$ along $-L$, $B'$ along $-L'$, and $-B''$ along $L''$.
Since $B$, $B'$ and $B''$ contain no $3$- or $4$-handles, and we have turned the handlebodies upside-down so that they are built by attaching handles to their boundary,
it follows that we have only attached $2$-, $3$-, and $4$-handles to $W$, and in particular the resulting closed 4-manifold, $X$, is simply-connected.

One quickly verifies that $X$ is a homology $\CP$ or $\CPb$, by additivity of the Euler characteristic, and since $X$ is simply-connected it also a homotopy $\CP$ or $\CPb$. 

We now prove the final statement. After performing bracketed blow-ups and blow-downs in Figure~\ref{fig:Addc}, we can suppose that $a_i \ge 2$ and $b_j \ge 2$ for each $i$ and $j$.

We claim that the quantity $c-q/p-s/r$ is invariant under these operations.
One way of seeing it is by explicitly computing how $c-q/p-r/s$ varies with these blow-ups and blow-downs: blow-ups and blow-downs in the interior of the $a$-chain or of the $b$-chain leave $c$, $q/p$, and $s/r$ unchanged, whereas blow-ups and blow-downs around the $c$-framed unknot change either $q/p$ or $s/r$ by $\pm1$, and $c$ by $\mp1$, thus leaving the quantity unchanged.
A different way of seeing this, which is useful below, is to  notice that $c-q/p-s/r$ is the Euler number of the Seifert fibration on $L(t,u)$ associated to the plumbing in Figure~\ref{fig:Addc}, with central vertex the red component.
This quantity is an invariant of the 3-manifold \emph{with the given Seifert fibration}.
However, blowing up and down the bracketed unknots corresponds exactly to preserving the Seifert fibration, and therefore its Euler number.

The final statement now follows by comparing the signatures of plumbed 4-manifolds obtained by removing the brackets in \Cref{fig:Addc}, and thus determining whether $W$ and hence $X$ is positive or negative definite.
From the discussion above, $W$ is the difference between the plumbing associated to a Seifert fibration (with two singular fibres and $e_0 = c$) and the two plumbings associated to the two legs.
Since the latter are negative definite (as they are the canonical plumbings of $L(p,q)$ and $L(r,s)$), $W$ is negative definite if and only if the Euler number $c - q/p - s/r$ of the fibration is negative, which concludes the proof.
\end{proof}

\begin{prop}\label{prop:Add4}
Suppose that we have two rational numbers, $p/q = [a_1,\dots,a_m]$ and $t/u = [b_1,\dots,b_m]$ and an index $1 \le j \le m$ such that
\begin{itemize}
\item $L(p,q)$ and $L(t,u)$ bound rational homology balls,
\item $p$ and $t$ are odd and pairwise coprime, and
\item for $i \neq j$, $a_i = b_i$, and $b_j = a_j + 4$.
\end{itemize}
Then there exists a homotopy $\CP$, $X$, and an embedding $B \sqcup B_{2,1} \sqcup (-B'') \hookrightarrow X$, where $B$ and $B''$ are two rational homology balls bounding $L(p,q)$ and $L(t,u)$, respectively, constructed  using handles of index at most $2$.
\end{prop}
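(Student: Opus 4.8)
The plan is to follow the strategy of \Cref{prop:Addc}: first build a four-dimensional cobordism $W$ with prescribed algebraic invariants, then cap it off with the three rational homology balls. Throughout write $L=L(p,q)$ and $L''=L(t,u)$, and recall that $\partial B_{2,1}=L(4,1)$. We may assume all $a_i\ge 2$, so that $b_i=a_i\ge 2$ for $i\ne j$ and $b_j=a_j+4\ge 6$, and hence $-b_1,\dots,-b_m$ is the canonical negative-definite plumbing chain of $L''$.

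\textbf{Step 1 (the cobordism).} I would construct $W$ as $[0,1]\times(L\#L(4,1))$ with a single $2$-handle attached along a connected sum $K\#K'$ of cores of genus-one Heegaard tori of $L$ and of $L(4,1)$, with framing governed by $a_j$. Concretely, present $L\#L(4,1)$ by the linear chain with weights $-a_1,\dots,-a_m$ together with a split $-4$-framed unknot, and attach the new $2$-handle along a curve linking a meridian of the $j$-th vertex of the chain and a meridian of the $-4$-framed unknot. I expect the core of the argument---and the main obstacle---to be a Kirby calculus computation identifying the outgoing end of $W$ with $L''$: in contrast with \Cref{prop:Addc}, where the new $2$-handle inserts a fresh entry $c$ into a continued fraction, here the $2$-handle must instead \emph{raise the $j$-th entry from $a_j$ to $a_j+4$}, converting the chain $-a_1,\dots,-a_m$ into $-b_1,\dots,-b_m$. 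This is exactly the point at which the hypotheses $a_i=b_i$ ($i\ne j$) and $b_j=a_j+4$ are used, and the delicate part is keeping track of the framing changes through the sequence of blow-ups.

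\textbf{Step 2 (invariants of $W$).} Turning $W$ upside down presents it as $[0,1]\times L''$ with a single $2$-handle attached, so $\pi_1(W)$ is a cyclic quotient of $\pi_1(L'')$, and $H_1(L'')$ surjects onto $H_1(W)$; the long exact sequence of the pair $\big(W,[0,1]\times(L\#L(4,1))\big)$ shows in addition that $H_1(L\#L(4,1))$ surjects onto $H_1(W)$ and that $b_2(W)=1$. Now $|H_1(L\#L(4,1))|=4p$ and $|H_1(L'')|=t$ are coprime, since $t$ is odd and $\gcd(p,t)=1$; hence $H_1(W)=0$, and cyclicity of $\pi_1(W)$ then gives $\pi_1(W)=1$. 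Finally I would verify that $W$ is positive definite. This is the analogue of the inequality $c-q/p-s/r>0$ from \Cref{prop:Addc}; here it should hold automatically---reflecting $b_j=a_j+4$ with $a_j\ge 2$---so that no extra hypothesis is needed, but this must be checked directly from the framing of the new $2$-handle.

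\textbf{Step 3 (capping off).} View $W$ as a cobordism from $\emptyset$ to $-(L\#L(4,1))\sqcup L''$, attach a $3$-handle along the connected-sum sphere in $-(L\#L(4,1))$ to obtain boundary $-L\sqcup -L(4,1)\sqcup L''$, and then glue $B$ along $-L$, $B_{2,1}$ along $-L(4,1)$, and $-B''$ along $L''$; here $B$ and $B''$ are rational homology balls bounding $L$ and $L''$ built with handles of index at most $2$, which exist by \cite{BBL,Lisca-ribbon} since these lens spaces bound rational homology balls by hypothesis. Gluing $B$, $B_{2,1}$, and $-B''$ to their boundaries contributes only handles of index at least $2$, so attaching these together with the $3$-handle to the simply connected $W$ yields a simply connected closed $4$-manifold $X$. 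Since $B$, $B_{2,1}$, $-B''$ are rational homology balls and the $3$-handle adds no second homology, $b_2(X)=b_2(W)=1$ and $\sigma(X)=\sigma(W)=+1$, so $X$ is a simply connected homology $\CP$ and hence a homotopy $\CP$. As $B$, $B_{2,1}$, and $-B''$ were glued along three distinct boundary components, they are pairwise disjoint in $X$, which gives the required embedding $B\sqcup B_{2,1}\sqcup(-B'')\hookrightarrow X$. All the real content is in Step 1.
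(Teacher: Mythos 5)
Your proposal follows the same overall strategy as the paper's proof (which is itself only a sketch): build a single-2-handle cobordism $W$ from $L\#L(4,1)$ to $L''$, prove $\pi_1(W)=1$ and $b_2(W)=1$ exactly as in \Cref{prop:Addc}, and cap off with the three rational balls; your Steps 2 and 3 coincide with the paper's argument. Two points deserve comment. First, in Step 1 the new 2-handle should be attached with framing $0$ (not a framing ``governed by $a_j$''): taking the attaching curve to be a band sum of a meridian of the $j$-th chain component and a meridian of the $-4$-framed unknot, framed $0$, the outgoing end is the boundary of the tree plumbing obtained from the chain $-a_1,\dots,-a_m$ by attaching a leg with weights $0,-4$ at the $j$-th vertex; absorbing the degree-two, weight-$0$ vertex merges $-a_j$ and $-4$ into $-(a_j+4)$, producing the chain $-b_1,\dots,-b_m$ and hence $L''$ in a single plumbing-calculus move --- the Kirby computation you flag as the main obstacle is in fact not delicate. (Two small inaccuracies there: a meridian of the $j$-th vertex is a core of a genus-one Heegaard solid torus only when $j\in\{1,m\}$, so the ``connected sum of Heegaard cores'' phrasing should be dropped in favour of your concrete description; and the reduction ``we may assume all $a_i\ge 2$'' is not harmless, since normalising $[a_1,\dots,a_m]$ need not preserve the relation $b_j=a_j+4$ at a single index --- though the construction itself works for arbitrary integer weights.) Second, your determination of the sign is genuinely different from the paper's: the paper pins it down by observing that no rational homology ball with boundary $L(4,1)$ can embed in a homotopy $\CPb$ (via Donaldson's theorem), whereas you propose to verify directly that $W$ is positive definite. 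Your route does work and, as you suspect, is automatic: the tree plumbing bounded by $L''$ contains the indefinite $2\times 2$ block spanned by the $0$- and $-4$-weighted vertices, so it is not negative definite, and Novikov additivity together with $b_2(W)=1$ forces $\sigma(W)=+1$. Either argument closes the proof, so the only genuine loose end in your write-up is that neither the framing nor the definiteness check is actually carried out.
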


As in the previous proposition, in this statement we do \emph{not} require that $a_j\ge 2$ or $b_j \ge 4$.

\begin{example}
We have $9/4=[3,2,2,2]$, $49/20=[3,2,6,2]$,  $L(9,4)=-\partial B_{3,1}$, and $L(49,20)=\partial B_{7,3}$.  We conclude from \Cref{prop:Add4} that
\[
B_{2,1}\sqcup -B_{3,1}\sqcup -B_{7,3}\hookrightarrow X,
\]
where $X$ is a homotopy $\CP$. 
\end{example}

Since the proof is very similar to the previous one, we only give a very quick sketch.

\begin{proof}[Proof sketch]
Call $L = L(p,q)$, $L' = L(4,1)$, and $L'' = L(t,u)$, and let $B$, $B'$, and $B''$ be rational homology balls with boundaries $L$, $L'$, and $L''$, respectively, each constructed using  handles of index at most 2.

We construct a cobordism $W$ from $L \# L'$ to $L''$ which has $\pi_1(W) = 1$ and $b_2(W) = 1$.
This cobordism is shown in \Cref{fig:Add4}.

\begin{figure}[htbp]
\centering
\includegraphics[width=14cm]{./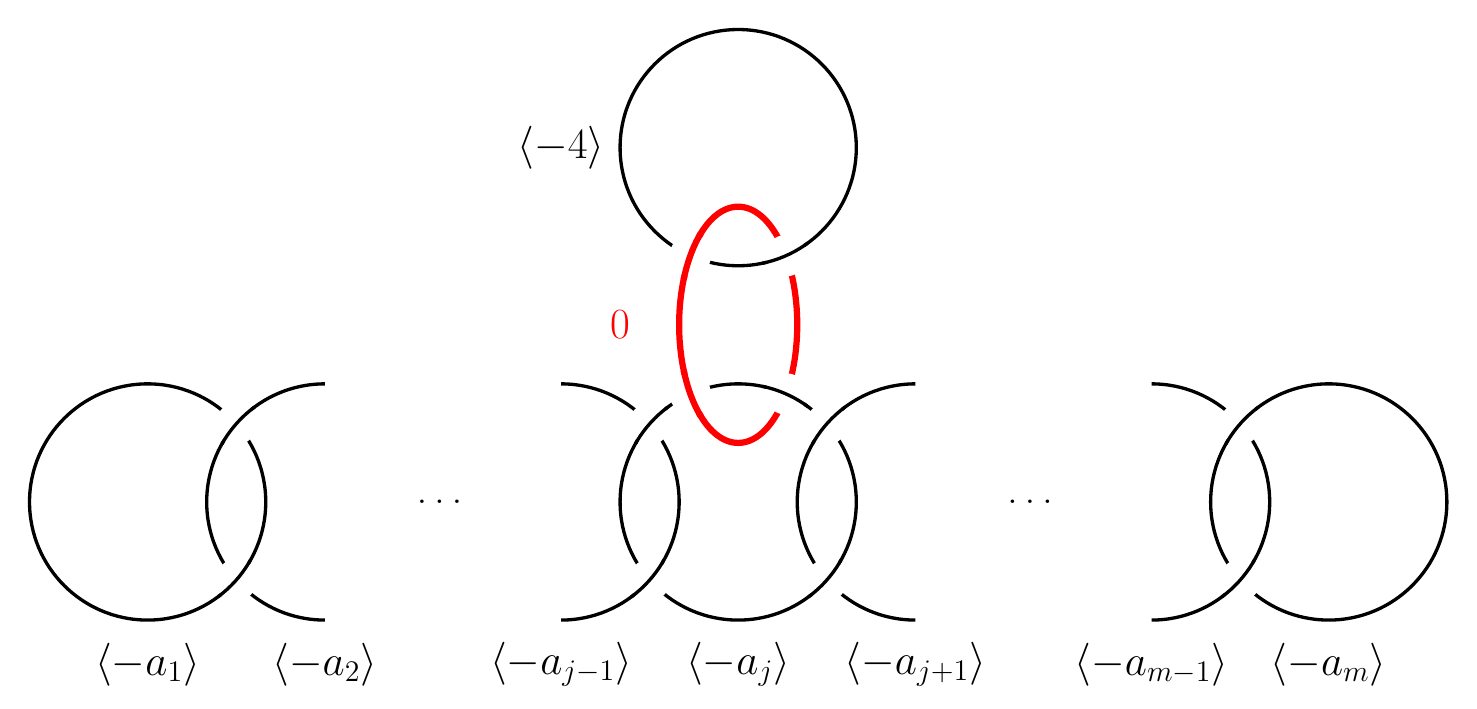}
\caption{{\bf A cobordism from $L\# L(4,1)$ to $L''$.} } 
\label{fig:Add4}
\end{figure}

One proves the two properties in exactly the same way as for \Cref{prop:Addc}: $\pi_1(W)$ is cyclic since it is a quotient of $\pi_1(L'')$, and it is trivial because of the condition on the coprimality of the orders of $H_1(L)\oplus H_1(L')$ and $H_1(L'')$, and the long exact sequence of the pair.
Moreover, $b_2(W) = 1$ because $W$ is a 2-handle cobordism between rational homology spheres.

Gluing in $B$, $B'$, and $-B''$ preserves these two properties and yields the desired 4-manifold $X$ which is a homotopy $\CP$ or $\CPb$, with an embedding of $B \sqcup B' \sqcup -B''$.

To pin down the sign, note that no rational homology ball $B'$ with boundary $L(4,1)$ can embed in a homotopy $\CPb$ (e.g. as a corollary of Donaldson's diagonalisation theorem~\cite{JPP, GOobst}), so $X$ is a homotopy $\CP$.
\end{proof}

\subsection{Berge knots}

Knot traces give us another source of embeddings of lens spaces in homotopy $\CP$s.

\begin{prop}\label{p:berge}
Let $K \subset S^3$ be a knot, $p = m^2$ a square, and $Y = S^3_{p}(K)$.
\begin{itemize}
\item[(i)] If $Y$ is a lens space which bounds a rational homology ball, then there exist a rational homology ball $B$ bounding $-Y$ and a homotopy $\CP$, $X$, such that $B$ embeds in $X$.
\item[(ii)] If $Y = L' \# L''$ is a connected sum of two lens spaces $L'$ and $L''$, each bounding a rational homology ball, then there exist rational homology balls $B'$ and $B''$ bounding $-L'$ and $-L''$, respectively, and a homotopy $\CP$, $X$, such that $B'\sqcup B''$ embeds in $X$.
\end{itemize}
In fact, we can choose any balls $B$, $B'$, or $B''$ that are constructed  using handles of index at most $2$.
\end{prop}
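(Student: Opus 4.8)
The plan is to realize $X$ as the trace of the surgery with rational homology balls glued to its boundary. Let $W$ be the $4$-manifold obtained from $B^4$ by attaching a single $2$-handle along $K\subset S^3=\partial B^4$ with framing $p$. Then $W$ is smooth, simply connected, has $b_2(W)=1$ and intersection form $\langle p\rangle$, and $\partial W=S^3_p(K)=Y$; since $p=m^2>0$, $W$ is positive definite. For part (i), I would first observe that $-Y$ is a lens space bounding a rational homology ball (namely the orientation-reversal of any rational ball bounded by $Y$), so by the fact recalled above (\cite{BBL,Lisca-ribbon}) we may choose $B$ with $\partial B=-Y$ built from handles of index at most $2$; then $X:=W\cup_Y B$ is a smooth closed $4$-manifold containing $B$ by construction.

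The remaining work is to check that $X$ is a homotopy $\CP$. By additivity of the Euler characteristic, $\chi(X)=\chi(W)+\chi(B)-\chi(Y)=2+1-0=3$. To see that $X$ is simply connected, turn $B$ upside down: as a cobordism from $\partial B$ it is built entirely from handles of index $\ge 2$, so $\pi_1(\partial B)\to\pi_1(B)$ is surjective. Van Kampen then gives $\pi_1(X)=\pi_1(W)*_{\pi_1(Y)}\pi_1(B)$, and since $\pi_1(W)=1$ and the image of $\pi_1(Y)$ normally generates $\pi_1(B)$, this group is trivial. Therefore $b_2(X)=\chi(X)-2=1$ and $H_2(X)$ is torsion-free, so the intersection form of $X$ is unimodular (as $X$ is closed and oriented), of rank one, and positive definite (since $H_2(X;\Q)\cong H_2(W;\Q)$ respecting intersection forms, and $\langle p\rangle$ is positive definite); hence it is $\langle 1\rangle$, and $X$ is homotopy equivalent to $\CP$.

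For part (ii), with $Y=L'\#L''$, I would first attach a $3$-handle to $W$ along a product neighbourhood $S^2\times D^1$ of the connected-sum sphere in $\partial W$, which is embedded and framed since that sphere is separating; this yields $W'$ with $\partial W'=L'\sqcup L''$. Running the long exact sequence of $(W',W)$ and using that the connected-sum sphere is null-homologous in $W$ shows $b_2(W')=1$, while $\pi_1(W')=\pi_1(W)=1$. Then I would glue rational homology balls $B'$ and $B''$ with boundaries $-L'$ and $-L''$, each built with handles of index at most $2$, to the two boundary components, obtaining $X:=W'\cup B'\cup B''$ with $B'\sqcup B''\hookrightarrow X$. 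The same Euler-characteristic count ($\chi(X)=1+1+1-0-0=3$) and two applications of van Kampen (using surjectivity of $\pi_1(\partial B')\to\pi_1(B')$ and of $\pi_1(\partial B'')\to\pi_1(B'')$) give $\pi_1(X)=1$, and then $X$ is a homotopy $\CP$ exactly as before.

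The final sentence of the proposition is then immediate: the only property of the balls used anywhere above is that $\pi_1$ of the boundary surjects onto $\pi_1$ of the ball, which holds for \emph{any} rational homology ball admitting a handle decomposition with handles of index at most $2$. The main obstacle — really the only delicate point — is the van Kampen argument for $\pi_1(X)=1$, where one must genuinely use the handle-index hypothesis to get surjectivity of $\pi_1(\partial B)\to\pi_1(B)$, together with the observation in (ii) that the $3$-handle is attached along an honestly embedded framed $2$-sphere. I would also note in passing that the hypothesis $p=m^2$ is not really an extra assumption: a closed $3$-manifold whose first homology has order $p$ can bound a rational homology ball only when $p$ is a perfect square, and in the argument above the squareness enters only through $p>0$, which is what guarantees that $X$ is a homotopy $\CP$ rather than a homotopy $\CPb$.
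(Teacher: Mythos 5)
Your proposal is correct and follows essentially the same route as the paper: glue the $p$-framed trace of $K$ to rational balls built with handles of index at most $2$, check $\chi=3$, get simple connectivity from the absence of relative $1$-handles in the upside-down balls (the paper phrases this as a handle decomposition of $X$ without $1$-handles, you phrase it via van Kampen and surjectivity of $\pi_1(\partial B)\to\pi_1(B)$ — the same point), and get positive definiteness from the genus-$g$ surface of square $p>0$ in the trace. Your explicit $3$-handle in part (ii) is exactly the paper's boundary connected sum $B'\natural B''$ viewed upside down, so there is no substantive difference.
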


This was essentially proven in~\cite{AGLL}, along the same line as the proofs in the previous section.
Below we sketch a proof for completeness.

\begin{example}
Let $r\ge 2$ be an integer.  The first two examples below are special cases of \Cref{thm:ANN}, but the third one is not.
\begin{itemize}
\item[(i)] $S^3_{(2r+2)^2}(T(2r+1,2r+3)) = L((2r+2)^2, 4r+3) = \de B_{2r+2,2}$.
This implies that there is an embedding of $-B_{2r+2,2}$ in a homotopy $\CP$ for each $r$.
\item[(ii)] $S^3_{r^2(r+1)^2}T(r^2,(r+1)^2) = L(r^2,r^2-2r-1) \# L((r+1)^2,2r+1)$.
These two lens spaces do indeed bound rational homology balls: $L(r^2,r^2-2r-1) = \de B_{r,r-2}$ and $L((r+1)^2,2r+1) = \de B_{r+1,2}$.
By the proposition above, $-B_{r,r-2}\sqcup {-B_{r+1,2}}$ disjointly embed in a homotopy $\CP$.
\item[(iii)] To give an example that does not come from a torus knot, we can look at $L(49,31)$: this is a surgery along a Berge knot (in family VII)~\cite{Berge,Jake}, but it does not arise as a surgery along a torus knot\footnote{Since $49 = 48+1 = 50-1$, the only lens spaces $L(49,q)$ that arise as integer surgeries along torus knots are $S^3_{49}(T(3,16)) = L(49,40)$ and $S^3_{49}(T(2,25)) = L(49,4)$~\cite{Moser}.}.
Moreover, it bounds a rational homology ball by~\cite{Lisca-ribbon}.
Therefore, there is a rational homology ball $B$ whose boundary is $L(49,18)$ (for instance, one of those constructed by Lisca~\cite{Lisca-ribbon}) that embeds in a homotopy $\CP$.
\end{itemize}
\end{example}

\begin{proof}[Proof of Proposition~\ref{p:berge}]
We prove (i) and (ii) simultaneously.
In case (i), choose $B$ to be any rational homology ball whose boundary is $-Y$, which is built only using handles of indices at most $2$. (Such balls exist by Lisca's work.)
In case (ii), choose $B'$ and $B''$ constructed with only handles of indices at most $2$, and let $B$ denote their boundary connected sum.
Note that $B$ contains $B' \sqcup B''$.
Denote with $T$ the trace of $p$-surgery along $K$, whose boundary is $-\de B$, by definition.

We claim that $X:=T \cup B$ is a homotopy $\CP$.
First, $X$ is simply-connected, since it has a handle decomposition without 1-handles: we glue $B$ by turning it upside down, so that it has a handle decomposition, relative to its boundary, with only handles of indices \emph{at least} 2.
On the other hand, $T$ is built only with a 0- and a 2-handle, so it is geometrically simply connected.
It follows that $X$ is simply connected.

By additivity of the Euler characteristic, $\chi(X) = 3$, and since it is closed, oriented, and simply-connected, $b_2(X) = 1$.
Finally, since $X$ contains $T$, which in turn contains a surface of self-intersection $p > 0$ (say, obtained by capping off a Seifert surface of $K$ with the core of the 2-handle), then $X$ is positive definite.
That is, $X$ is a homotopy $\CP$.
\end{proof}

\section{Data}\label{s:data}

\Cref{table:triples} contains \emph{all} triples of pairs of integers, $(p_1,q_1)$, $(p_2,q_2)$, $(p_3,q_3)$ such that:
\begin{itemize}
\item there exist rational homology balls $B_1$, $B_2$, and $B_3$ with boundaries $L(p_1,q_1)$, $L(p_2,q_2)$, and $L(p_3,q_3)$, respectively, that are unobstructed from disjointly embedding in $\CP$ according to~\cite{GOobst} (see also~\cite{JPP}), and
\item $4\le p_1 < p_2 < p_3 \le 256$.
\end{itemize}

\begin{table}[ht]
\begin{center}
\small
\begin{tabular}[t]{cccl}
\toprule
$p_1,q_1$ & $p_2,q_2$ & $p_3,q_3$ & Realised \\
\midrule
4,1 & 9,4 & 25,6 & yes$^{*,\dagger}$ \\
4,1 & 9,4 & 49,22 & yes \\
4,1 & 9,4 & 121,34 & yes \\
4,1 & 9,4 & 169,38 & yes \\
4,1 & 9,4 & 169,66 & yes \\
4,1 & 25,6 & 49,8 & yes$^{*,\dagger}$ \\
4,1 & 25,6 & 169,40 & yes \\
4,1 & 25,11 & 81,37 & yes \\
4,1 & 25,11 & 121,45	 & yes \\
4,1 & 25,11 & 169,61 & yes \\
4,1 & 49,8 & 81,10 & yes$^{*,\dagger}$ \\
4,1 & 49,15 & 169,53 & yes \\
4,1 & 49,15 & 225,61 &	yes \\
4,1 & 49,22 & 121,56 & yes \\
4,1 & 81,10 & 121,12 & yes$^{*,\dagger}$ \\
4,1 & 81,37 & 169,79 & yes \\
4,1 & 121,12 & 169,14 & yes$^{*,\dagger}$ \\
4,1 & 121,56 & 225,106	 & yes \\
4,1 & 169,14 & 225,16 & yes$^{*,\dagger}$ \\
9,4 & 16,9 & 49,22 & yes$^*$ \\
9,4 & 25,6 & 64,17 & yes$^*$ \\
9,4 & 25,6 & 64,25 & no \\
9,4 & 25,6 & 256,113 & no\\
9,4 & 25,9 & 256,113 & yes$^{\dagger}$\\
9,4 & 25,11 & 49,15 & no \\
9,4 & 25,11 & 49,18 & no \\
9,4 & 25,11 & 49,22 & no \\
9,4 & 25,11 & 121,45	 & no \\
9,4 & 25,11 & 121,71	 & no \\
9,4 & 25,11 & 169,66 &	no \\
9,4 & 25,11 & 256,113 & no\\
9,4 & 49,15 & 64,25 & no \\
9,4 & 49,15 & 169,53 & no \\
\end{tabular}%
\qquad
\begin{tabular}[t]{cccl}
\toprule
$p_1,q_1$ & $p_2,q_2$ & $p_3,q_3$ & Realised \\
\midrule
9,4 & 49,18 & 64,25 & yes \\
9,4 & 49,22 & 64,25 & no \\
9,4 & 49,22 & 100,41 &yes$^*$ \\
9,4 & 64,17 & 121,34 & yes$^*$ \\
9,4 & 64,25 & 121,34 & no \\
9,4 & 64,25 & 121,71 & no \\
9,4 & 100,41 & 169,66 & yes$^*$ \\
9,4 & 121,34 & 196,57 & yes$^*$ \\
9,4 & 169,66 & 256,97 & yes$^*$ \\
16,9 & 25,11 & 81,37 & yes$^*$ \\
16,9 & 49,22 & 121,56 & yes$^*$ \\
16,9 & 81,37 & 169,79 & yes$^*$ \\
16,9 & 121,56 & 225,106 &yes$^*$ \\
25,6 & 49,8 & 144,25 & yes$^*$ \\
25,6 & 64,17 & 169,40 & yes$^*$ \\
25,6 & 64,25 & 169,40 & no \\
25,11 & 36,13 & 121,45 & 	yes$^*$ \\
25,11 & 49,15 & 64,25 & no \\
25,11 & 49,15 & 81,37 & no \\
25,11 & 49,18 & 64,25 & yes \\
25,11 & 49,22 & 64,25 & no \\
25,11 & 81,31 & 121,71 & 	no \\
25,11 & 81,37 & 196,85 &	yes$^*$ \\
25,11 & 121,45 & 256,97 & yes$^*$\\
36,13 & 49,15 & 169,53 & yes$^*$ \\
49,8 & 81,10 & 256,33 & yes$^*$\\
49,15 & 64,17 & 225,61 & yes$^*$ \\
49,15 & 64,25 & 225,61	 & no \\
49,22 & 81,19 & 121,56 & no \\
64,25 & 81,31 & 121,71 &	yes \\
81,37 & 121,23 & 169,79 &	no \\
121,56 & 169,27 & 225,106 & no \\
\end{tabular}%
\end{center}
\medskip
\caption{{\bf Triples of lens spaces which may embed disjointly in $\CP$.}}
\label{table:triples}
\end{table}

The last column contains a `yes' if there is such an embedding in a homotopy $\CP$, and `no' if we could not find any such embedding with the help of Proposition~\ref{prop:Addc} or~\ref{prop:Add4}. 
An asterisk on `yes' indicates that this arises from a 2-Farey triple as in \Cref{thm:ANN}, while a dagger indicates that this arises in Lisca and Parma's construction with horizontal decompositions \cite{HD2}.

Note that Lisca and Parma construct embeddings of triples of rational balls of type $\pm B_{p,q}$ with $\gcd(p,q) = 1$, so the triples produced in Theorem~\ref{thm:ANN} and the ones from~\cite{HD2} can only overlap if one of the rational balls is $\pm B_{2,0} = \mp B_{2,1}$, with boundary $\mp L(4,1)$.

\bibliographystyle{amsplain}
\bibliography{packings}

\end{document}